\theoremstyle{plain}
\newtheorem{theorem}{Theorem}[section]
\newtheorem{corollary}[theorem]{Corollary}
\newtheorem{proposition}[theorem]{Proposition}
\newtheorem{lemma}[theorem]{Lemma}
\newtheorem{remark}[theorem]{Remark}
\newtheorem{example}[theorem]{Example}
\numberwithin{theorem}{section}
\numberwithin{equation}{section}
\newcommand{\average}{{\mathchoice {\kern1ex\vcenter{\hrule height.4pt
width 6pt depth0pt} \kern-9.7pt} {\kern1ex\vcenter{\hrule
height.4pt width 4.3pt depth0pt} \kern-7pt} {} {} }}
\def\R{\mathbb{R}}
\def\div{\text{div}}
\renewcommand{\a }{\alpha }
\renewcommand{\d}{\delta }
\newcommand{\D }{\Delta }
\newcommand{\e }{\varepsilon }
\newcommand{\G }{\Gamma}
\renewcommand{\l }{\lambda }
\newcommand{\n }{\nabla }
\newcommand{\vp }{\varphi }
\newcommand{\s }{\sigma }
\newcommand{\z }{\zeta}
\renewcommand{\O }{\Omega }
\newcommand{\ov}{\overline}
\newcommand{\be}{\begin{equation}}
\newcommand{\ee}{\end{equation}}
\newcommand{\de}{\partial}
\newcommand{\al}{\alpha}
\newcommand{\calH }{\mathcal{H}}
\newcommand{\calE }{\mathcal{E}}
\newcommand{\N}{\mathbb{N}}
\newcommand{\cE}{{\mathcal E}}
\newcommand{\cH}{{\mathcal H}}
\newcommand{\cL}{{\mathcal L}}
\newcommand{\cX}{{\mathcal X}}
\newcommand{\cY}{{\mathcal Y}}
\newcommand{\dist}{{\rm dist}}
\newcommand{\eps}{\varepsilon}
\DeclareMathOperator{\id}{id}
\renewcommand{\epsilon}{\varepsilon}
\newcommand{\Ds}{ (-\D)^s}
\begin{document}
\title{A generalized fractional Pohozaev identity and applications}

 \author[]
{Sidy Moctar Djitte${}^{1,2}$, Mouhamed Moustapha Fall${}^1$, Tobias Weth${}^2$}


\address{${}^1$African Institute for Mathematical Sciences in Senegal (AIMS Senegal), 
KM 2, Route de Joal, B.P. 14 18. Mbour, S\'en\'egal.}

\address{${}^2$Goethe-Universit\"{a}t Frankfurt, Institut f\"{u}r Mathematik.
Robert-Mayer-Str. 10, D-60629 Frankfurt, Germany.}

\email{djitte@math.uni-frankfurt.de, sidy.m.djitte@aims-senegal.org}
\email{weth@math.uni-frankfurt.de}
\email{mouhamed.m.fall@aims-senegal.org}

 \begin{abstract}
   \noindent 
   We prove a fractional Pohozaev type identity in a generalized framework and discuss its applications. Specifically, we shall consider applications to nonexistence of solutions in the case of supercritical semilinear Dirichlet problems and regarding a Hadamard formula for the derivative of Dirichlet eigenvalues of the fractional Laplacian with respect to domain deformations. We also derive the simplicity of radial eigenvalues in the case of radial bounded domains and apply the Hadamard formula to this case. 
 \end{abstract}

\maketitle
\setcounter{equation}{0}

\section{Introduction}

Let $\O$ be a bounded open set of class $C^{1,1}$ and $s \in (0,1)$. We consider the semilinear fractional Dirichlet problem
\begin{equation}
  \label{eq:gradient-type-intro}
(-\Delta)^s u = f(u) \quad \text{in $\Omega$},\qquad u = 0 \quad \text{on $\R^N \setminus \Omega$.}  
\end{equation}
Here $(-\Delta)^s$ denotes the fractional Laplacian, which, for sufficiently regular functions $\vp$, is pointwisely given by 
$$
\Ds\vp(x)=c_{N,s}\,PV \!\!\int_{\R^N} \frac{\vp(x)-\vp(y)}{|x-y|^{N+2s}}\, dy=\frac{c_{N,s}}{2}\int_{\R^N}\frac{2\vp (x)-\vp(x+y)-\vp(x-y)}{|y|^{N+2s}}\, dy.
$$
with $c_{N,s}= \pi^{-\frac{N}{2}}s 4^s\frac{\Gamma(\frac{N}{2}+s)}{\Gamma(1-s)}$.
Moreover, we assume that
\begin{equation}
  \label{eq:lipschitz-assumption}
\text{$f: \R \to \R$ in (\ref{eq:gradient-type-intro}) is locally Lipschitz,}
\end{equation}
and we let $F \in C^1(\R)$ be defined by $F(t)= \int_0^t f(s)\,ds$.
We consider (\ref{eq:gradient-type-intro}) in weak sense. For this we define
\begin{equation}\label{wsp-d-special-case}
\cH^{s}_{0}(\Omega):=\{u\in H^{s}(\R^N) : u\equiv 0 ~\text{ on\ \ $\R^{N}\setminus \Omega$}\} \subset H^{s}(\R^N).
\end{equation}
Here $H^s(\R^N)$ is the set of those functions $u$ for which ${\cE}(u,u)$, with ${\cE}$ define as in \eqref{bilinear form}, is finite. By definition, a function $u \in \cH^s_0(\Omega) \cap L^\infty(\Omega)$ is a weak solution of (\ref{eq:gradient-type-intro}) if 
$$
{\cE}(u,v) = \int_{\Omega} f(u)v\,dx \qquad \text{for all $v \in \cH^s_0(\Omega)$,}
$$
where 
\begin{equation}\label{bilinear form}
(v,w) \mapsto {\cE}(v,w):=\frac{c_{N,s}}{2}\int_{\R^N}\int_{\R^N}\frac{(v(x)-v(y))(w(x)-w(y))}{|x-y|^{N+2s}}\ dxdy.
\end{equation}
From (\ref{eq:lipschitz-assumption}) and the elliptic regularity theory for weak solutions developed in recent years (see \cite{RX,S}), it follows that every weak solution $u \in \cH^s_0(\Omega) \cap L^\infty(\Omega)$ is contained in the space $C_0^s(\overline \Omega) \cap C^{2s+1-\eps}_{loc}(\Omega)$ for every $\eps \in (0,2s+1)$. Here $C_0^s(\ov\O)=\{u\in C^s(\ov\O):u=0\quad\text{in}\quad\R^N\setminus\O\}$. Moreover, it has been proved
in \cite{RX} that
$$
\text{the function $\psi_u:= \frac{u}{\delta^s}$ extends uniquely to a function in $C^\a(\ov\O)$ for some $\a>0$,}
$$
where, here and in the following, we let $\d(x)= \dist(x,\R^N \setminus \Omega)$ for $x \in \R^N$.

In the seminal paper \cite{RX-Poh}, Ros-Oton and Serra introduced and proved a fractional Pohozaev identity which states that every (weak) solution  of (\ref{eq:gradient-type-intro}) satisfies
\begin{equation}
\label{pohozaev-rosoton-serra}
  \Gamma(1+s)^2 \int_{\partial \Omega} \psi_u^2\, x \cdot \nu\, d\sigma = 2N \int_\Omega F(u)\,dx - (N-2s) \int_\Omega f(u)u \,dx,
\end{equation}
see \cite[Theorem 1.1]{RX-Poh}. Here $\nu$ in (\ref{pohozaev-rosoton-serra}) is the unit outer normal vector field. This identity has proved to be highly relevant in the study of (\ref{eq:gradient-type-intro}). In particular, it yields a nonexistence result for (\ref{eq:gradient-type-intro}) in the case where $\Omega$ is starshaped and $f$ satisfies a supercritical growth condition, see \cite[Corollary 1.3]{RX-Poh}. Somewhat surprisingly, (\ref{pohozaev-rosoton-serra}) is already useful in the linear case $f(u)= \lambda u$, as it gives valuable information on the fractional boundary derivative $\psi_u:= \frac{u}{\delta^s}$ of Dirichlet eigenfunctions of the fractional Laplacian $(-\Delta)^s$. In particular, as we shall see in Section~\ref{eq-eigenvalue-problem} below, it allows to show the simplicity of radial Dirichlet eigenvalues of $(-\Delta)^s$ in the case where $\Omega$ is a ball or an annulus. Moreover, (\ref{pohozaev-rosoton-serra}) has been used recently in \cite{FFTW} to prove the nonradiality of second Dirichlet eigenfunctions of $(-\Delta)^s$ in the case $\Omega= B_1(0)$. Note that these properties are standard in the local case $s=1$, where tools like separation of variables and ODE techniques are available.   

The main purpose of this paper is to present a generalization of the identity (\ref{pohozaev-rosoton-serra}) depending on a given Lipschitz vector field $\cX \in C^{0,1}(\R^N,\R^N)$. We recall that every such vector field is a.e. differentiable on $\R^N$, so its derivative $d\, \cX$ and also $\div\,\cX$ are a.e. well defined on $\R^N$. For every such vector field, we let
\begin{align}
  &K_{\cX}(x,y)\label{def-kernel-K-X}:= \frac{c_{N,s}}{2}\Bigl[\bigl(\div\, \cX(x) + \div\, \cX(y)\bigr)- (N+2s)\frac{ \bigl(\cX(x)-\cX(y)\bigr)\cdot (x-y)}{|x-y|^2}\Bigr]\frac{1}{|x-y|^{N+2s}} 
\end{align}
for $x,y \in \R^N$, $x \not = y$, and we call $K_{\cX}$ the {\em fractional deformation kernel associated with the vector field $\cX$}. We will justify this name further below. Moreover, we denote by $\calE_{\cX}$ the bilinear form associated to the Kernel $K_\cX$, i.e, 
\be\label{eq-K-X-bilinear-form}
\calE_{\cX}(v,w):=\int_{\R^N}\int_{\R^N}(v(x)-v(y))(w(x)-w(y))K_\cX(x,y)\,dxdy\quad \text{for all $v,w\in H^s(\R^N).$}
\ee
Our first main result for problem \eqref{eq:gradient-type-intro} is the following. 
\begin{theorem}
\label{generalized-pohozaev}
Let $u \in \cH^s_0(\Omega) \cap L^\infty(\Omega)$ be a (weak) solution of the problem~(\ref{eq:gradient-type-intro}).
Then we have
\begin{equation}
  \label{eq:V-prime-formula}
  \Gamma(1+s)^2 \int_{\partial \Omega} (\frac{u}{\d^s})^2\,  \cX \cdot \nu \,dx   =
  2\int_{\Omega} F(u) \div \, \cX\,dx - \calE_{\cX}(u,u)\quad \text{for all $\cX\in C^{0,1}(\R^N,\R^N)$}
\end{equation}
with $F(t)= \int_0^t f(s)\,ds$. Here $\nu$ is the outer unit normal to the boundary and $\calE_{\cX}(v,w)$ is defined as in \eqref{eq-K-X-bilinear-form}.
\end{theorem}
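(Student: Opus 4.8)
The plan is to deform $\Omega$ along the flow of $\cX$ and then identify the resulting boundary term via the Caffarelli--Silvestre extension, thereby extending the strategy of \cite{RX-Poh} from the dilation field $\cX(x)=x$ to an arbitrary Lipschitz field. So let $(\Phi_t)_{|t|<t_0}$ be the flow of $\cX$ ($\partial_t\Phi_t=\cX\circ\Phi_t$, $\Phi_0=\mathrm{id}$); since $\cX\in C^{0,1}(\R^N,\R^N)$ each $\Phi_t$ is a bi-Lipschitz homeomorphism with $\Phi_t^{\pm1}\to\mathrm{id}$ and bounded Lipschitz constants as $t\to0$, and $J_t:=\det d\Phi_t>0$ for $|t|$ small. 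Set $\Omega_t:=\Phi_t(\Omega)$, $u_t:=u\circ\Phi_t^{-1}\in\cH^s_0(\Omega_t)\cap L^\infty(\Omega_t)$, and $J(w):=\tfrac12\cE(w,w)-\int_{\R^N}F(w)\,dx$. By a standard approximation argument it suffices to treat $\cX\in C^\infty_c(\R^N,\R^N)$ (both sides of \eqref{eq:V-prime-formula} depend continuously on $\cX$ under mollification--truncation with uniformly bounded Lipschitz constants, the right-hand side being finite since $u/\delta^s\in C^\alpha(\overline{\Omega})$ and $\cE_\cX(u,u)<\infty$); I would also use throughout the regularity $u\in C^s_0(\overline{\Omega})\cap C^{2s+1-\eps}_{\loc}(\Omega)$. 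First, changing variables $x=\Phi_t(a)$, $y=\Phi_t(b)$ gives $\cE(u_t,u_t)=\tfrac{c_{N,s}}{2}\iint\frac{(u(a)-u(b))^2}{|\Phi_t(a)-\Phi_t(b)|^{N+2s}}J_t(a)J_t(b)\,da\,db$ and $\int F(u_t)\,dx=\int_\Omega F(u)\,J_t\,da$, and differentiating under the integral sign (which is justified because $\partial_t|_0J_t=\div\cX$, $\partial_t|_0|\Phi_t(a)-\Phi_t(b)|^{-(N+2s)}=-(N+2s)(a-b)\cdot(\cX(a)-\cX(b))|a-b|^{-(N+2s)-2}$, and the difference quotients of the integrand are dominated by $C(u(a)-u(b))^2|a-b|^{-(N+2s)}\in L^1(\R^N\times\R^N)$ because $\cE(u,u)<\infty$) yields, with $K_\cX$ as in \eqref{def-kernel-K-X},
\[
\frac{d}{dt}\Big|_{t=0}\cE(u_t,u_t)=\cE_\cX(u,u),\qquad \frac{d}{dt}\Big|_{t=0}\int F(u_t)\,dx=\int_\Omega F(u)\,\div\cX\,dx,
\]
so that $\tfrac{d}{dt}\big|_{0}J(u_t)=\tfrac12\cE_\cX(u,u)-\int_\Omega F(u)\div\cX\,dx$. (This is also precisely why $K_\cX$ deserves the name ``deformation kernel'': it is the infinitesimal effect of $\Phi_t$ on the Dirichlet form.) Consequently \eqref{eq:V-prime-formula} is equivalent to the Hadamard-type identity $\tfrac{d}{dt}\big|_0 J(u_t)=-\tfrac{\Gamma(1+s)^2}{2}\int_{\partial\Omega}(u/\delta^s)^2\,\cX\cdot\nu\,d\sigma$.

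Next I would recompute this same derivative using the equation $\Ds u=f(u)$, and this is where essentially all the work lies. Introduce the Caffarelli--Silvestre extension $U$ of $u$: $\div(y^{1-2s}\nabla U)=0$ in $\R^{N+1}_+$, $U(\cdot,0)=u$, $-d_s\lim_{y\to0^+}y^{1-2s}\partial_y U=\Ds u$, $d_s\int_{\R^{N+1}_+}y^{1-2s}|\nabla U|^2\,dX=\cE(u,u)$ (suitable $d_s>0$). Applying the differentiation of the previous step to the weighted energy along the lifted field $\cX^*(x,y):=(\cX(x),0)$ (using that $d_s\int_{\R^{N+1}_+}y^{1-2s}|\nabla(U\circ\Psi_t^{-1})|^2\,dX\ge\cE(u_t,u_t)$ with equality at $t=0$, where $\Psi_t(x,y):=(\Phi_t(x),y)$) gives the representation
\[
\cE_\cX(u,u)=-2d_s\int_{\R^{N+1}_+}y^{1-2s}\Bigl[\nabla_x U\cdot d\cX\,\nabla_x U-\tfrac12\div\cX\,|\nabla U|^2\Bigr]\,dX.
\]
I would then multiply $\div(y^{1-2s}\nabla U)=0$ by $\cX^*\cdot\nabla U$, integrate over $\R^{N+1}_+\setminus T_\rho$ with $T_\rho$ a thin tubular neighbourhood of radius $\rho$ of $\partial\Omega\times\{0\}$, and use $\nabla U\cdot\nabla(\cX^*\cdot\nabla U)=\nabla U\cdot d\cX^*\nabla U+\tfrac12\cX^*\cdot\nabla|\nabla U|^2$ followed by integration by parts of the last term, to obtain
\[
0=\int_{\partial(\R^{N+1}_+\setminus T_\rho)}\!\!\Bigl(y^{1-2s}\partial_\nu U\,(\cX^*\!\cdot\nabla U)-\tfrac12y^{1-2s}(\cX^*\!\cdot\nu)|\nabla U|^2\Bigr)-\int_{\R^{N+1}_+\setminus T_\rho}\!\!y^{1-2s}\Bigl[\nabla_x U\cdot d\cX\,\nabla_x U-\tfrac12\div\cX\,|\nabla U|^2\Bigr].
\]
On $\{y=0\}$ one has $\cX^*\cdot\nu=0$ and $y^{1-2s}\partial_\nu U\to d_s^{-1}\Ds u$, so the $\{y=0\}$-part of the surface integral tends, as $\rho\to0$, to $d_s^{-1}\int_\Omega f(u)\cX\cdot\nabla u\,dx=-d_s^{-1}\int_\Omega F(u)\div\cX\,dx$ (integration by parts, $F(u)=0$ on $\partial\Omega$), while by the displayed representation the bulk term tends to $\tfrac1{2d_s}\cE_\cX(u,u)$; so everything reduces to showing that the remaining piece, the surface integral over $\partial T_\rho\cap\R^{N+1}_+$, converges as $\rho\to0$ to $\tfrac{\Gamma(1+s)^2}{2d_s}\int_{\partial\Omega}(u/\delta^s)^2\cX\cdot\nu\,d\sigma$.

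This last convergence is the main obstacle. At each $x_0\in\partial\Omega$ I would use the boundary expansion $u(x)=\tfrac{u}{\delta^s}(x_0)\,\delta(x)^s+o(\delta(x)^s)$ (equivalently $u/\delta^s\in C^\alpha(\overline{\Omega})$) and the corresponding expansion of $U$ in terms of the explicit one-dimensional profile $t\mapsto(t)_+^s$; blowing $U$ up at $(x_0,0)$ should reduce the integrand on $\partial T_\rho$, to leading order, to $(u/\delta^s)(x_0)^2\,\cX(x_0)\cdot\nu(x_0)$ times a universal constant, and computing the ensuing one-dimensional integral should produce the factor $\Gamma(1+s)^2$; integrating over $\partial\Omega$ then gives the claim. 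This is the point that generalizes the boundary computation forming the technical core of \cite{RX-Poh} from $\cX(x)=x$ to an arbitrary Lipschitz field, and it is where the difficulty is concentrated. Collecting the three limits in the last display produces $\Gamma(1+s)^2\int_{\partial\Omega}(u/\delta^s)^2\cX\cdot\nu\,d\sigma=2\int_\Omega F(u)\div\cX\,dx-\cE_\cX(u,u)$, which is \eqref{eq:V-prime-formula}; as a consistency check, taking $\cX(x)=x$ (so $\div\cX=N$ and $\cE_\cX(u,u)=(N-2s)\cE(u,u)=(N-2s)\int_\Omega f(u)u\,dx$) recovers \eqref{pohozaev-rosoton-serra}.
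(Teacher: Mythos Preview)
Your strategy via domain deformation and the Caffarelli--Silvestre extension is different from the paper's and is in principle viable, but the proposal stops short precisely at the step you yourself flag as ``the main obstacle'': the convergence of the surface integral over $\partial T_\rho\cap\R^{N+1}_+$ to $\tfrac{\Gamma(1+s)^2}{2d_s}\int_{\partial\Omega}(u/\delta^s)^2\,\cX\cdot\nu\,d\sigma$. You sketch a blow-up argument invoking the one-dimensional profile $(t)_+^s$, but no estimates are given to control the remainder terms or to justify the limit uniformly along $\partial\Omega$. In \cite{RX-Poh} for $\cX(x)=x$ this step occupies the bulk of the paper and leans on the global scaling structure of the dilation field (the identity $u_\lambda(x)=u(\lambda x)$ and explicit weights of the form $|x|^2+y^2$); for a general Lipschitz $\cX$ one only has $\cX(x)=\cX(x_0)+O(|x-x_0|)$ near each boundary point, so the boundary analysis would have to be redone with genuinely new estimates. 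As written, the proposal is a plan rather than a proof.

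By contrast, the paper avoids the extension entirely. It first proves $\cE_\cX(U,U)=-2\int_{\R^N}\nabla U\cdot\cX\,\Ds U\,dx$ for compactly supported $U$ by a short integration by parts directly on the kernel (Lemma~2.1), and then passes to $u$ via the cutoffs $U_k=u\zeta_k$. The boundary term arises not from a tubular-neighbourhood limit in $\R^{N+1}_+$ but from the limit $\lim_{k\to\infty}\int_\Omega\nabla(u\zeta_k)\cdot\cX\bigl(u\,\Ds\zeta_k-I(u,\zeta_k)\bigr)\,dx=\tfrac{\Gamma(1+s)^2}{2}\int_{\partial\Omega}\psi_u^2\,\cX\cdot\nu\,d\sigma$, which is imported from \cite{SMT} and rests on $\psi_u\in C^\alpha(\overline\Omega)$ together with the gradient bounds of \cite{FS-2019}. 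The advantage is that the delicate boundary analysis is done once, in \cite{SMT}, in a form already uniform in $\cX$; Theorem~\ref{generalized-pohozaev} then follows in a few lines from Theorem~\ref{Generalized-integration-by-parts} via $\Ds u=f(u)$ and $\nabla F(u)=f(u)\nabla u$. Your identification $\cE_\cX(u,u)=\tfrac{d}{dt}\big|_0\cE(u_t,u_t)$ is correct and is exactly what the paper uses later (Lemma~\ref{diff-Riesz-1}) for its Hadamard formula, but for the Pohozaev identity itself the paper's intrinsic route is both shorter and complete.
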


Theorem \ref{generalized-pohozaev} is a particular case of the following more general identity. 
\begin{theorem}\label{Generalized-integration-by-parts}
Let $u\in H^s(\R^N)$ such that $u\equiv0$ in $\R^N\setminus\O$. Moreover, assume $\Ds u\in L^\infty(\O)$ if $2s>1$ and $\Ds u\in C^\al_{loc}(\O)\cap L^\infty(\O)$ with  $\al>1-2s$ if $2s\leq 1$. Then we have
\be\label{eq-generalized-integration-by-parts}
2\int_{\O}\n u\cdot\cX\Ds u\,dx=-\G^2(1+s)\int_{\partial\O}\big(\frac{u}{\d^s}\big)^2\cX\cdot\nu\,dx-\calE_{\cX}(u,u),
\ee
for any vector field $\cX\in C^{0,1}(\R^N,\R^N)$.
\end{theorem}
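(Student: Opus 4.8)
The idea is to use that the deformation kernel $K_\cX$ is, off the diagonal, a sum of divergences, so that an integration by parts inside the double integral defining $\calE_\cX(u,u)$ produces exactly the bulk term $\int_\O\n u\cdot\cX\,\Ds u$ together with a boundary-layer contribution carrying the factor $\G^2(1+s)$. \emph{Step 1 (divergence structure of the kernel).} From $\n_z|z|^{-N-2s}=-(N+2s)\,z\,|z|^{-N-2s-2}$ one checks by a direct computation that
\[
K_\cX(x,y)=\div_x\!\Big(\tfrac{c_{N,s}}{2}\,\tfrac{\cX(x)}{|x-y|^{N+2s}}\Big)+\div_y\!\Big(\tfrac{c_{N,s}}{2}\,\tfrac{\cX(y)}{|x-y|^{N+2s}}\Big),\qquad x\neq y.
\]
Since $\div\,\cX\in L^\infty(\R^N)$ and $|(\cX(x)-\cX(y))\cdot(x-y)|\le\|d\,\cX\|_\infty|x-y|^2$, we have $|K_\cX(x,y)|\le C|x-y|^{-N-2s}$, so $\calE_\cX(u,u)$ converges absolutely for $u\in H^s(\R^N)$ and equals $\lim_{\e\to0}\iint_{|x-y|>\e}(u(x)-u(y))^2K_\cX(x,y)\,dxdy$. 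Inserting the identity above, applying Fubini on $\{|x-y|>\e\}$ and symmetrising $x\leftrightarrow y$,
\[
\calE_\cX(u,u)=c_{N,s}\lim_{\e\to0}\int_{\R^N}\Big[\int_{|x-y|>\e}(u(x)-u(y))^2\,\div_x\!\Big(\tfrac{\cX(x)}{|x-y|^{N+2s}}\Big)dx\Big]dy .
\]

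\emph{Step 2 (integration by parts in $x$).} By the elliptic regularity recalled in the introduction — $u\in C^s_0(\ov\O)\cap C^{2s+1-\e'}_{\loc}(\O)$, $\psi_u:=u/\d^s\in C^\al(\ov\O)$, together with the standard gradient bound $|\n u|\le C\d^{s-1}$ (and, when $2s\le1$, $u\in C^1_{\loc}(\O)$, which is where the hypothesis $\Ds u\in C^\al_{\loc}(\O)$ with $\al>1-2s$ enters) — for each fixed $y$ the map $x\mapsto(u(x)-u(y))^2$ lies in $W^{1,1}_{\loc}(\R^N)$: it is continuous across $\partial\O$ with trace $u(y)^2$, is $W^{1,1}$ on each side, and has weak gradient $2(u(x)-u(y))\n u(x)$ on $\O$ and $0$ outside $\ov\O$. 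Applying the divergence theorem on $\R^N\setminus\ov{B_\e(y)}$ — whence the interior surface $\partial\O$ contributes nothing — the inner integral above equals
\[
-2\int_{\O\cap\{|x-y|>\e\}}(u(x)-u(y))\,\n u(x)\cdot\cX(x)\,\frac{dx}{|x-y|^{N+2s}}\;-\;R_\e(y),\qquad R_\e(y):=\int_{|x-y|=\e}(u(x)-u(y))^2\,\frac{\cX(x)\cdot(x-y)}{\e^{N+2s+1}}\,d\sigma(x).
\]
Integrating in $y$, using Fubini, and writing $\int_{|x-y|>\e}\frac{u(x)-u(y)}{|x-y|^{N+2s}}dy=c_{N,s}^{-1}\Ds u(x)-PV\!\!\int_{|x-y|<\e}\frac{u(x)-u(y)}{|x-y|^{N+2s}}dy$, we obtain, using that $\int_\O|\n u\cdot\cX|\,|\Ds u|\lesssim\int_\O\d^{s-1}<\infty$,
\[
\calE_\cX(u,u)+2\int_\O(\n u\cdot\cX)\,\Ds u\,dx=c_{N,s}\lim_{\e\to0}\Big(\,2\!\int_\O(\n u\cdot\cX)(x)\,PV\!\!\int_{|x-y|<\e}\!\!\frac{u(x)-u(y)}{|x-y|^{N+2s}}\,dy\,dx-\int_{\R^N}R_\e(y)\,dy\Big)=:c_{N,s}\lim_{\e\to0}\mathcal D_\e .
\]

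\emph{Step 3 (the boundary term — the main obstacle).} It remains to prove $c_{N,s}\lim_{\e\to0}\mathcal D_\e=-\G^2(1+s)\int_{\partial\O}\psi_u^2\,\cX\cdot\nu\,d\sigma$. Both pieces of $\mathcal D_\e$ are short-range: if $x$ stays at a positive distance from $\partial\O$ then $PV\!\int_{|x-y|<\e}\frac{u(x)-u(y)}{|x-y|^{N+2s}}dy=O(\e^\kappa)$ ($\kappa>0$) by interior regularity, and the integrand of $R_\e(y)$ vanishes unless $x$ and $y$ both lie within $O(\e)$ of $\partial\O$; hence the limit is local on $\partial\O$. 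I localise by a partition of unity, flatten the boundary, and substitute $u=\psi_u\,\d^s+o(\d^s)$ (legitimate since $\psi_u\in C^\al(\ov\O)$ and $u\in C^{2s+1-\e'}_{\loc}(\O)$). The contribution reduces, to leading order in $\e$, to a one-dimensional integral in the normal variable, in which only $\cX\cdot\nu$ survives — a tangential constant field kills both $\n u\cdot\cX$ and $\cX\cdot\nu$ in the model. Evaluating that model integral is a special-function computation: it combines the Beta integral $\int_0^1\tau^{-s}(1-\tau)^s\,d\tau=\G(1+s)\G(1-s)$ (and the companion integral encoding $\|(t+1)_+^s-t_+^s\|^2_{\dot H^s(\R)}$) with the factor $\G(1-s)^{-1}$ contained in $c_{N,s}$, leaving precisely $\G^2(1+s)$. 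Substituting into the last display of Step 2 yields \eqref{eq-generalized-integration-by-parts}.

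\emph{Discussion.} The hard point is Step 3: the bulk identity is essentially forced by the divergence structure of $K_\cX$, whereas pinning down the sharp constant $\G^2(1+s)$ requires the delicate $\e\to0$ analysis of $\mathcal D_\e$ and the boundary-layer model computation. An equivalent route runs through the flow $\Psi_t$ of $\cX$: a change of variables gives $\frac{d}{dt}\big|_{t=0}\cE(u\circ\Psi_t,u\circ\Psi_t)=-\calE_\cX(u,u)$, and expanding $\cE(u\circ\Psi_t,u\circ\Psi_t)$ through $\int_\O(u\circ\Psi_t)\,\Ds u\,dx$ and the thin shell $\Psi_{-t}(\O)\setminus\O$ exhibits the same boundary-layer term $t\,\G^2(1+s)\int_{\partial\O}\psi_u^2\,\cX\cdot\nu\,d\sigma+o(t)$, hence the same obstacle.
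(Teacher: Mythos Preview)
Your Steps 1--2 are correct and are essentially the content of the paper's Lemma~\ref{lem:j1kprimes-preliminary}: the divergence structure of $K_\cX$ followed by integration by parts in $x$. The structural difference is that the paper applies this not to $u$ but to compactly supported approximations $U_k=u\zeta_k$, with $\zeta_k\in C^{1,1}$ cutoffs vanishing on $\{\delta<1/k\}$. Because $U_k$ is Lipschitz, the sphere term (your $R_\e$) vanishes as $\e\to0$ and one obtains the clean identity $\calE_\cX(U_k,U_k)=-2\int_\Omega\nabla U_k\cdot\cX\,\Ds U_k\,dx$ with \emph{no} boundary contribution. The $\Gamma^2(1+s)$ term then appears only in the limit $k\to\infty$: writing $\Ds U_k=\zeta_k\Ds u+u\Ds\zeta_k-I(u,\zeta_k)$ via the product rule, the cross terms satisfy
\[
\lim_{k\to\infty}\int_\Omega\nabla(u\zeta_k)\cdot\cX\,\bigl(u\,\Ds\zeta_k-I(u,\zeta_k)\bigr)\,dx=\frac{\Gamma^2(1+s)}{2}\int_{\partial\Omega}\psi_u^2\,\cX\cdot\nu\,d\sigma,
\]
a limit identity the paper imports wholesale from \cite{SMT}.

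Your Step 3, by contrast, is a genuine gap. Working directly with $u\in C^s_0(\overline\Omega)$ (not Lipschitz at $\partial\Omega$), neither the sphere term $R_\e$ nor the truncated PV term vanishes, and you must extract the boundary integral from their combination $\mathcal D_\e$. You correctly identify this as ``the main obstacle'', but what you write is a heuristic, not a proof: ``localise, flatten, substitute $u=\psi_u\delta^s+o(\delta^s)$, evaluate a model integral'' is precisely the programme carried out over several pages in \cite{SMT} (and, in the special case $\cX=\id$, in \cite{RX-Poh}). The Beta-integral remark alone does not pin down $\Gamma^2(1+s)$; to make the computation rigorous one needs uniform control of the substitution error as $\e\to0$ --- which requires the gradient bound $|\nabla\psi_u|\le C\delta^{\alpha-1}$ from \cite{FS-2019}, not merely $\psi_u\in C^\alpha$ --- together with justified interchange of limits and the explicit one-dimensional model evaluation. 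The approximation route the paper takes has the virtue of isolating this hard limit as a separate, reusable statement (Proposition~\ref{prop-limit-identity}) rather than entangling it with the bulk integration by parts. Your alternative flow argument in the Discussion faces exactly the same obstacle for the same reason.
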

To deduce formula \eqref{eq:V-prime-formula} from \eqref{eq-generalized-integration-by-parts} it simply suffices to use the pointwise identities $\Ds u=f(u)$, $\n F(u)=f(u)\n u$ and to integrate by parts, noting that $F(0)=0$. As noted already above, the regularity assumptions of Theorem \ref{Generalized-integration-by-parts} are satisfied in this case as a consequence of assumption (\ref{eq:lipschitz-assumption}) and
the elliptic regularity theory for weak solutions developed in \cite{RX,S}.
We note that Theorem~\ref{Generalized-integration-by-parts} generalizes \cite[Proposition 1.6]{RX-Poh} where the particular vector field $\cX \equiv \id: \R^N \to \R^N$ is considered. Indeed, in the case $\cX \equiv \id$, we have
$$
\div\, \cX \equiv N\qquad \text{and}\qquad K_{\cX}(x,y) = \frac{c_{N,s}}{2}(N-2s)|x-y|^{-N-2s}\quad \text{for $x,y \in \R^N$,}
$$
so \eqref{eq-generalized-integration-by-parts} reduces to 
$$
2\int_{\O}(x\cdot\n u)\Ds u\,dx=-\G^2(1+s)\int_{\partial\O}\big(\frac{u}{\d^s}\big)^2x\cdot\nu\,dx-(N-2s)\int_{\O}u\Ds u\,dx.
$$
This is the identity stated in \cite[Proposition 1.6]{RX-Poh}. Moreover, for every weak solution of (\ref{eq:gradient-type-intro}) we have 
$$
\calE_{\cX}(u,u)= (N-2s){\cE}(u,u)= (N-2s) \int_{\Omega}f(u)u\,dx
$$
in this case, and therefore (\ref{eq:V-prime-formula}) reduces to (\ref{pohozaev-rosoton-serra}).\\

We also note the following integration-by-parts formula, which is an immediate consequence of Theorem~\ref{Generalized-integration-by-parts}.

\begin{theorem}\label{main-thm-P4}
Let $u,v\in H^s(\R^N)$ be functions with $u\equiv0\equiv v$ in $\R^N\setminus\O$. Moreover, assume $\Ds u,\,\Ds v\in L^\infty(\O)$ if $2s>1$ and $\Ds u,\,\Ds v\in C^\al_{loc}(\O)\cap L^\infty(\O)$ with  $\al>1-2s$ if $2s\leq 1$. Then, for any vector field $\cX\in C^{0,1}(\R^N,\R^N)$, it holds that 
\be\label{eq-integration-by-parts2}
\int_{\O}\n u\cdot \cX\Ds v\,dx=-\int_{\O}\n v\cdot\cX\Ds u\,dx-\G^2(1+s)\int_{\partial\O}\frac{u}{\d^s}\frac{v}{\d^s}\,\cX\cdot\nu\,d\sigma-\calE_{\cX}(u,v).
\ee
\end{theorem}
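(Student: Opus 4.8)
The plan is to derive \eqref{eq-integration-by-parts2} from Theorem \ref{Generalized-integration-by-parts} by a polarization argument, exploiting the fact that \eqref{eq-generalized-integration-by-parts} is a quadratic identity in $u$ and that both sides of \eqref{eq-integration-by-parts2} are symmetric-bilinear objects in the correct sense. First I would record the identities of Theorem \ref{Generalized-integration-by-parts} applied to $u$, to $v$, and to $u+v$. Note that if $u,v\in H^s(\R^N)$ both vanish outside $\O$ and both satisfy the stated regularity hypothesis on $\Ds u,\Ds v$ in $\O$, then $u+v$ vanishes outside $\O$ as well, and $\Ds(u+v)=\Ds u+\Ds v$ lies in $L^\infty(\O)$ (resp. in $C^\al_{loc}(\O)\cap L^\infty(\O)$ with $\al>1-2s$) because the relevant function spaces are linear. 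Hence Theorem \ref{Generalized-integration-by-parts} is applicable to $u+v$.

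Next I would expand. The left-hand side of \eqref{eq-generalized-integration-by-parts} for $u+v$ is
$$
2\int_\O \n(u+v)\cdot\cX\,\Ds(u+v)\,dx
= 2\int_\O \n u\cdot\cX\Ds u\,dx + 2\int_\O \n v\cdot\cX\Ds v\,dx
+ 2\int_\O\big(\n u\cdot\cX\Ds v + \n v\cdot\cX\Ds u\big)\,dx.
$$
On the right-hand side, the boundary term expands using $\big(\tfrac{u+v}{\d^s}\big)^2 = \big(\tfrac{u}{\d^s}\big)^2 + \big(\tfrac{v}{\d^s}\big)^2 + 2\tfrac{u}{\d^s}\tfrac{v}{\d^s}$ on $\partial\O$ — here one uses that $u/\d^s$ and $v/\d^s$ each extend continuously to $\ov\O$ by the regularity recalled in the introduction, so the boundary integrand is well defined and the expansion is pointwise on $\partial\O$. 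The form term expands by bilinearity and symmetry of $\calE_\cX$ (which is manifest from \eqref{eq-K-X-bilinear-form}, the kernel $K_\cX$ being symmetric in $x,y$): $\calE_\cX(u+v,u+v)=\calE_\cX(u,u)+\calE_\cX(v,v)+2\calE_\cX(u,v)$. Subtracting the identities \eqref{eq-generalized-integration-by-parts} for $u$ alone and for $v$ alone from the identity for $u+v$ then cancels all the "pure" terms and leaves precisely
$$
2\int_\O\big(\n u\cdot\cX\Ds v + \n v\cdot\cX\Ds u\big)\,dx
= -2\G^2(1+s)\int_{\partial\O}\frac{u}{\d^s}\frac{v}{\d^s}\,\cX\cdot\nu\,d\sigma - 2\calE_\cX(u,v),
$$
and dividing by $2$ gives \eqref{eq-integration-by-parts2}.

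There is essentially no hard analytic obstacle here, since all the heavy lifting — the boundary regularity of $u/\d^s$, the finiteness and convergence of $\calE_\cX(u,u)$, and the validity of the quadratic identity — is already contained in Theorem \ref{Generalized-integration-by-parts}. The only point deserving a line of care is checking that $u+v$ falls within the hypotheses of that theorem, i.e. that the regularity class imposed on $\Ds(\cdot)$ in $\O$ is stable under addition; this is immediate. One should also observe that $\calE_\cX(u,v)$ is finite: since $u,v\in H^s(\R^N)$ and $K_\cX(x,y)$ is dominated by $C|x-y|^{-N-2s}$ (using $\cX\in C^{0,1}$, so $|\cX(x)-\cX(y)|\le L|x-y|$ and $|\div\cX|\le C$ a.e.), the double integral in \eqref{eq-K-X-bilinear-form} is controlled by the Gagliardo seminorms of $u$ and $v$ via Cauchy–Schwarz, hence absolutely convergent; this justifies the bilinear manipulations above.
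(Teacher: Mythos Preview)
Your proof is correct and follows exactly the approach indicated in the paper: apply Theorem~\ref{Generalized-integration-by-parts} to $u$, $v$, and $u+v$, then subtract to polarize the quadratic identity \eqref{eq-generalized-integration-by-parts} into the bilinear one \eqref{eq-integration-by-parts2}. Your additional remarks verifying that $u+v$ satisfies the hypotheses and that $\calE_\cX(u,v)$ is finite are helpful elaborations, but the core argument is identical to the paper's.
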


To deduce this theorem from Theorem~\ref{Generalized-integration-by-parts}, it suffices to apply \eqref{eq-generalized-integration-by-parts} to $u,v$ and $u+v$ and to evaluate the difference $\calE_{\cX}(u+v,u+v)-\calE_{\cX}(v,v)-\calE_{\cX}(u,u)$. We note that Theorem~\ref{main-thm-P4} is stated in \cite[Theorem 1.9]{RX-Poh} in the particular case of constant coordinate vector fields $\cX \equiv e_i$, $i=1,\dots,N$, in which $K_\cX \equiv 0$ and therefore \eqref{eq-integration-by-parts2} reduces to  
$$
\int_{\O}u_{x_i} \Ds v\,dx=-\int_{\O} v_{x_i}  \Ds u\,dx-\G^2(1+s)\int_{\partial\O}\frac{u}{\d^s}\frac{v}{\d^s}\,\nu_i\,d\sigma.
$$

The following corollary of Theorem~\ref{generalized-pohozaev} is devoted again to problem (\ref{eq:gradient-type-intro}) and deals with a class of vector fields leading to the same RHS as in (\ref{pohozaev-rosoton-serra}) (up to a constant).

\begin{corollary}
  \label{sec:ros-oton-generalized}
  Let $\cX \in C^{0,1}(\R^N,\R^N)$, and suppose that
  \begin{equation}
    \label{eq:c-condition}
\bigl(\cX(x)-\cX(y)\bigr) \cdot (x-y)   = c|x-y|^2 \qquad \text{for all $x,y \in \R^N$}
  \end{equation}
with some constant $c \in \R$. Moreover, let $u \in \cH^s_0(\Omega) \cap L^\infty(\Omega)$ be a (weak) solution of the problem~(\ref{eq:gradient-type-intro}). Then we have
\begin{equation}
  \label{eq:V-prime-formula-cor-1}
  \Gamma(1+s)^2 \int_{\partial \Omega} \psi_u^2\,  \cX \cdot \nu \,dx   =
  c \Bigl( 2N \int_\Omega F(u)\,dx - (N-2s) \int_\Omega f(u)u \,dx \Bigr).
  \end{equation}
\end{corollary}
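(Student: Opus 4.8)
The plan is simply to specialize Theorem~\ref{generalized-pohozaev} to the class of vector fields singled out by \eqref{eq:c-condition}, for which the identity \eqref{eq:V-prime-formula} collapses to the Ros-Oton--Serra right-hand side. First I would extract the algebraic content of \eqref{eq:c-condition}. Setting $Y(x):=\cX(x)-cx$, the hypothesis reads $(Y(x)-Y(y))\cdot(x-y)=0$ for all $x,y\in\R^N$. Writing $g:=Y-Y(0)$, expanding this relation and specializing $y=0$ yields $g(x)\cdot x=0$, and then specializing the resulting general identity $g(x)\cdot y=-g(y)\cdot x$ at $y=e_j$ shows that each component $g_j$ is linear in $x$. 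Hence $g(x)=Mx$ with $x^{T}Mx\equiv 0$, so $M$ is skew-symmetric, and therefore
\[
\cX(x)=cx+Mx+Y(0),\qquad M^{T}=-M .
\]
In particular $\div\,\cX\equiv cN+\operatorname{tr}M=cN$ on all of $\R^N$, since skew-symmetric matrices are traceless. (One could instead obtain $\div\,\cX=cN$ a.e.\ by differentiating \eqref{eq:c-condition} along a coordinate direction and letting $y\to x$, which gives $d\cX+(d\cX)^{T}=2cI$; but the affine description makes the remaining computation transparent.)

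Next I would feed $\div\,\cX\equiv cN$ together with \eqref{eq:c-condition} directly into the definition \eqref{def-kernel-K-X} of the deformation kernel. Since $\bigl(\cX(x)-\cX(y)\bigr)\cdot(x-y)/|x-y|^{2}=c$ and $\div\,\cX(x)+\div\,\cX(y)=2cN$, this gives
\[
K_{\cX}(x,y)=\frac{c_{N,s}}{2}\bigl[2cN-(N+2s)c\bigr]\frac{1}{|x-y|^{N+2s}}=c(N-2s)\,\frac{c_{N,s}}{2}\,\frac{1}{|x-y|^{N+2s}} .
\]
Comparing with \eqref{bilinear form}, the kernel $K_{\cX}$ is exactly $c(N-2s)$ times the kernel defining $\cE$, so by \eqref{eq-K-X-bilinear-form} we obtain $\calE_{\cX}(v,w)=c(N-2s)\,\cE(v,w)$ for all $v,w\in H^s(\R^N)$; in particular $\calE_{\cX}(u,u)=c(N-2s)\,\cE(u,u)$.

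Finally I would invoke the weak formulation of \eqref{eq:gradient-type-intro}: testing with $v=u$ gives $\cE(u,u)=\int_{\Omega}f(u)u\,dx$, while $\div\,\cX\equiv cN$ yields $2\int_{\Omega}F(u)\,\div\,\cX\,dx=2cN\int_{\Omega}F(u)\,dx$. Substituting these two facts, together with $\calE_{\cX}(u,u)=c(N-2s)\cE(u,u)$, into the identity \eqref{eq:V-prime-formula} of Theorem~\ref{generalized-pohozaev} produces precisely \eqref{eq:V-prime-formula-cor-1}. There is no genuine analytic difficulty here, since all the substance is already contained in Theorem~\ref{generalized-pohozaev}; the only point deserving attention is the constant bookkeeping in the kernel, namely the observation that the skew part $M$ of $\cX$ contributes neither to $\div\,\cX$ nor to $\bigl(\cX(x)-\cX(y)\bigr)\cdot(x-y)$, so that $K_{\cX}$ reduces to a pure multiple of $|x-y|^{-N-2s}$ and the whole right-hand side becomes $c$ times the original Ros-Oton--Serra expression.
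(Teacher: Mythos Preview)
Your proof is correct and follows essentially the same route as the paper: compute $\div\,\cX\equiv cN$, deduce that $K_{\cX}(x,y)=c(N-2s)\frac{c_{N,s}}{2}|x-y|^{-N-2s}$, hence $\calE_{\cX}(u,u)=c(N-2s)\cE(u,u)=c(N-2s)\int_\Omega f(u)u\,dx$, and substitute into Theorem~\ref{generalized-pohozaev}. The only difference is that you derive the full affine structure $\cX(x)=cx+Mx+Y(0)$ with $M$ skew-symmetric, whereas the paper simply invokes Remark~\ref{remark-intro-1} (i.e.\ $(d\cX(y)h)\cdot h=c|h|^2$ applied to the coordinate vectors) to obtain $\div\,\cX\equiv cN$ directly; your extra step is not needed for the corollary but is correct and harmless.
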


\begin{remark}
\label{remark-intro-1}
{\rm  It is easy to see that condition (\ref{eq:c-condition}) is equivalent to
  \begin{equation}
    \label{eq:c-condition-variant}
\bigl(d\cX(y)h\bigr) \cdot h =  c|h|^2 \qquad \text{for a.e. $y \in \R^N$ and every $h \in \R^N$.}
  \end{equation}
  Applying (\ref{eq:c-condition-variant}) to the coordinate vectors $e_1,\dots,e_N \in \R^N$, we deduce that $\div \,\cX \equiv c N$ a.e. on $\R^N$.
We note that condition (\ref{eq:c-condition}) is satisfied if
\begin{equation}
  \label{eq:c-cond-sufficient-con}
  x \mapsto \cX(x) = c x + \cY(x) + v,
\end{equation}
where $v \in \R^N$ is a constant vector and $\cY$ is any linear combination of the vector fields
$$
x \mapsto \cY^{ij}(x)=   x_i e_j- x_j e_i, \qquad 1 \le i < j \le N.
$$}
\end{remark}

In Section~\ref{sec:proof-gener-pohoz} we also deduce the following corollary from Theorem~\ref{generalized-pohozaev}.

\begin{corollary}
  \label{sec:ros-oton-generalized-inequality}
  Let $\cX \in C^{0,1}(\R^N,\R^N)$, and suppose that
  \begin{equation}
    \label{eq:c-1-c-2-condition}
\div\, \cX(x) \ge c_1 \qquad \text{and}\qquad \bigl(\cX(x)-\cX(y)\bigr) \cdot (x-y)   \leq c_2|x-y|^2 \qquad \text{for all $x,y \in \R^N$}
  \end{equation}
with constants $c_1,c_2 \in \R$. Moreover, let $u \in \cH^s_0(\Omega) \cap L^\infty(\Omega)$ be a (weak) solution of the problem~(\ref{eq:gradient-type-intro}) with a nonlinearity $f$ satisfying (\ref{eq:lipschitz-assumption}) and $F(t)= \int_0^t f(s)\,ds \ge 0$ for $t \in \R$. Then we have
\begin{equation}
  \label{eq:V-prime-formula-cor-2}
  \Gamma(1+s)^2 \int_{\partial \Omega} \psi_u^2\,  \cX \cdot \nu \,dx \le   \int_{\Omega} \Bigl(2c_2 N F(u)-\bigl[2 c_1-(N+2s) c_2 \bigr] f(u)u\Bigr)dx.  
  \end{equation}
  In particular, if $f(u)= |u|^{p-2}u$ for some $p>2$, then
\begin{equation}
  \label{eq:V-prime-formula-cor-2-2}
  \Gamma(1+s)^2 \int_{\partial \Omega} \psi_u^2\,  \cX \cdot \nu \,dx \le
  \Bigl(\frac{2c_2 N}{p}-\bigl[2 c_1-(N+2s) c_2 \bigr] \Bigr)  \int_{\Omega} |u|^pdx. 
  \end{equation}
\end{corollary}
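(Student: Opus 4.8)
The plan is to derive both inequalities directly from the identity \eqref{eq:V-prime-formula} of Theorem~\ref{generalized-pohozaev} by estimating, in the right direction, the two terms on its right-hand side.

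First I would treat the term $2\int_\Omega F(u)\,\div\,\cX\,dx$. The crucial point is that the upper bound on $(\cX(x)-\cX(y))\cdot(x-y)$ in \eqref{eq:c-1-c-2-condition} forces an upper bound on the divergence. Arguing as in Remark~\ref{remark-intro-1}: at a point $y$ of differentiability of $\cX$, for $h\in\R^N$ and $t>0$, inserting $x=y+th$ into the second condition of \eqref{eq:c-1-c-2-condition}, dividing by $t^2$ and letting $t\to0^+$ gives $(d\cX(y)h)\cdot h\le c_2|h|^2$; taking $h=e_1,\dots,e_N$ and summing yields $\div\,\cX(y)=\sum_{i=1}^N(d\cX(y)e_i)\cdot e_i\le Nc_2$ for a.e.\ $y\in\R^N$. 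Since $F(u)\ge0$ by hypothesis, this gives $2\int_\Omega F(u)\,\div\,\cX\,dx\le 2Nc_2\int_\Omega F(u)\,dx$.

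Next, for the term $-\calE_\cX(u,u)$ I would bound the kernel $K_\cX$ from below pointwise. Feeding $\div\,\cX(x)+\div\,\cX(y)\ge2c_1$ and $(\cX(x)-\cX(y))\cdot(x-y)\le c_2|x-y|^2$ into \eqref{def-kernel-K-X} and using $c_{N,s}>0$ yields
\[
K_\cX(x,y)\ \ge\ \frac{c_{N,s}}{2}\bigl[2c_1-(N+2s)c_2\bigr]|x-y|^{-N-2s}\qquad\text{for a.e.\ }x\ne y.
\]
Multiplying by $(u(x)-u(y))^2\ge0$, integrating over $\R^N\times\R^N$, and comparing with \eqref{bilinear form} gives $\calE_\cX(u,u)\ge\bigl[2c_1-(N+2s)c_2\bigr]\,{\cE}(u,u)$; this holds irrespective of the sign of the bracket, precisely because $(u(x)-u(y))^2\ge0$. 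Testing the weak formulation of \eqref{eq:gradient-type-intro} with $v=u$ gives ${\cE}(u,u)=\int_\Omega f(u)u\,dx$, hence $-\calE_\cX(u,u)\le-\bigl[2c_1-(N+2s)c_2\bigr]\int_\Omega f(u)u\,dx$.

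Adding the two estimates into \eqref{eq:V-prime-formula} produces \eqref{eq:V-prime-formula-cor-2}. For the special case $f(u)=|u|^{p-2}u$ one has $F(t)=|t|^p/p$ and $f(u)u=|u|^p$, so substituting into \eqref{eq:V-prime-formula-cor-2} immediately gives \eqref{eq:V-prime-formula-cor-2-2}. The argument is essentially sign bookkeeping once Theorem~\ref{generalized-pohozaev} is available; the only step requiring an actual idea rather than routine manipulation is recognizing that the pointwise upper bound on $(\cX(x)-\cX(y))\cdot(x-y)$ already controls $\div\,\cX$ from above, which is what makes the $F(u)$-term manageable using only $F\ge0$.
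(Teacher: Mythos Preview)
Your proof is correct and follows essentially the same route as the paper: both argue that the second condition in \eqref{eq:c-1-c-2-condition} forces $\div\,\cX\le Nc_2$ a.e., combine this with $F\ge0$ to estimate the first term in \eqref{eq:V-prime-formula}, bound $K_\cX$ from below pointwise to control $\calE_\cX(u,u)$ via ${\cE}(u,u)=\int_\Omega f(u)u\,dx$, and then specialize to $f(u)=|u|^{p-2}u$.
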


Corollary~\ref{sec:ros-oton-generalized-inequality} gives rise to the nonexistence of nontrivial solutions of (\ref{eq:gradient-type-intro}) in the case where $u \mapsto f(u)= |u|^{p-2}u$ is a homogeneous nonlinearity with supercritical growth. In particular, the following non-existence result is an immediate consequence. 

\begin{corollary}
  \label{sec:supercritical-nonexistence-c-1-c-2}
  Let $\cX \in C^{0,1}(\R^N,\R^N)$ be a vector field satisfying
  \eqref{eq:c-1-c-2-condition} with some constants $c_2>0$ and $c_1 \in (\frac{c_2 N}{2},c_2N]$. Moreover, suppose that
  $$
  0< s < \bigl(\frac{c_1}{c_2}-\frac{N}{2}\bigr),\qquad 
  p> \frac{2N}{\frac{2c_1}{c_2}- (N+2s)},
  $$
  and let $u \in \cH^s_0(\Omega) \cap L^\infty(\Omega)$ be a (weak) solution of the problem
\begin{equation}
  \label{eq:gradient-type-intro-homogeneous}
(-\Delta)^s u =|u|^{p-2} u \quad \text{in $\Omega$},\qquad u = 0 \quad \text{on $\R^N \setminus \Omega$.}  
\end{equation}
If $\cX \cdot \nu \ge 0$ on $\partial \Omega$, then $u \equiv 0$.
\end{corollary}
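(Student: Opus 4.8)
The plan is to obtain this nonexistence statement as a direct consequence of Corollary~\ref{sec:ros-oton-generalized-inequality}; the only work is to check that the hypotheses line up and that the constant on the right-hand side of \eqref{eq:V-prime-formula-cor-2-2} is strictly negative. First I would observe that $f(t)=|t|^{p-2}t$ is locally Lipschitz (indeed $C^1$ for $p>2$), so it satisfies \eqref{eq:lipschitz-assumption}, and that $F(t)=\int_0^t f(\sigma)\,d\sigma=\tfrac1p|t|^p\ge 0$. Hence Corollary~\ref{sec:ros-oton-generalized-inequality} is applicable to the given vector field $\cX$, which satisfies \eqref{eq:c-1-c-2-condition} with the constants $c_1,c_2$, and to the solution $u$ of \eqref{eq:gradient-type-intro-homogeneous}.

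Next I would translate the numerical hypotheses into a sign statement. Since $c_2>0$, the condition $0<s<\frac{c_1}{c_2}-\frac N2$ is exactly equivalent to $2c_1-(N+2s)c_2>0$, so $\frac{2c_1}{c_2}-(N+2s)>0$ and the threshold $\frac{2N}{\frac{2c_1}{c_2}-(N+2s)}$ is a well-defined positive number. Using $\frac{2c_1}{c_2}-(N+2s)>0$ and $p>0$, a straightforward rearrangement shows that the hypothesis $p>\frac{2N}{\frac{2c_1}{c_2}-(N+2s)}$ is equivalent to
$$
\frac{2c_2N}{p}<2c_1-(N+2s)c_2,\qquad\text{that is,}\qquad \frac{2c_2N}{p}-\bigl[2c_1-(N+2s)c_2\bigr]<0.
$$
I would also record that $p>2$ in this regime: from the hypothesis $c_1\le c_2N$ we get $\frac{c_1}{c_2}\le N<N+s$, which is equivalent to $\frac{2N}{\frac{2c_1}{c_2}-(N+2s)}>2$, so that the homogeneous case \eqref{eq:V-prime-formula-cor-2-2} of Corollary~\ref{sec:ros-oton-generalized-inequality} is indeed available.

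Finally I would combine these facts. By \eqref{eq:V-prime-formula-cor-2-2} together with the strict negativity just established,
$$
\Gamma(1+s)^2\int_{\partial\Omega}\psi_u^2\,\cX\cdot\nu\,dx\ \le\ \Bigl(\frac{2c_2N}{p}-\bigl[2c_1-(N+2s)c_2\bigr]\Bigr)\int_\Omega|u|^p\,dx\ \le\ 0,
$$
while on the other hand $\psi_u^2\ge 0$ everywhere on $\partial\Omega$ and, by assumption, $\cX\cdot\nu\ge 0$ on $\partial\Omega$, so the left-hand integral is $\ge 0$. Hence both sides vanish, and since the bracketed coefficient is strictly negative this forces $\int_\Omega|u|^p\,dx=0$, i.e.\ $u=0$ a.e.\ in $\Omega$; as $u\equiv 0$ on $\R^N\setminus\Omega$ by definition of $\cH^s_0(\Omega)$, we conclude $u\equiv 0$. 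I do not expect a genuine obstacle here: the analytic substance is entirely contained in Theorem~\ref{generalized-pohozaev} and Corollary~\ref{sec:ros-oton-generalized-inequality}, and the only points needing care are the elementary equivalences between the stated hypotheses and the sign condition on the coefficient, together with the check that $p>2$ so that \eqref{eq:V-prime-formula-cor-2-2} may legitimately be invoked.
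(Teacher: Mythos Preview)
Your proposal is correct and follows exactly the approach the paper takes: the paper simply states that Corollary~\ref{sec:supercritical-nonexistence-c-1-c-2} is ``an immediate consequence'' of Corollary~\ref{sec:ros-oton-generalized-inequality}, and you have filled in precisely the elementary verifications (Lipschitz property of $f$, nonnegativity of $F$, strict negativity of the coefficient in \eqref{eq:V-prime-formula-cor-2-2}, and the check that $p>2$) needed to make that implication explicit.
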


If  (\ref{eq:c-condition}) holds for a vector field $\cX \in C^{0,1}(\R^N,\R^N)$  with some $c>0$, then, by Remark~\ref{remark-intro-1}, condition~(\ref{eq:c-1-c-2-condition}) holds with $c=c_2$ and $c_1 = Nc_2$. In this case, Corollary~\ref{sec:supercritical-nonexistence-c-1-c-2} reduces to the following statement. 
\begin{corollary}
  \label{sec:supercritical-nonexistence-c}
  Let $\cX \in C^{0,1}(\R^N,\R^N)$ be a vector field satisfying
  (\ref{eq:c-condition}) for some $c>0$.  Moreover, suppose that 
  $$
N \ge 2s \qquad \text{and}\qquad 
  p> \frac{2N}{N -2s},
  $$
  and let $u \in \cH^s_0(\Omega) \cap L^\infty(\Omega)$ be a (weak) solution of problem (\ref{eq:gradient-type-intro-homogeneous}). If $\cX \cdot \nu \ge 0$ on $\partial \Omega$, then $u \equiv 0$.
\end{corollary}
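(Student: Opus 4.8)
The plan is to derive Corollary~\ref{sec:supercritical-nonexistence-c} as a special case of Corollary~\ref{sec:supercritical-nonexistence-c-1-c-2}. First I would record that condition~\eqref{eq:c-condition} with constant $c>0$ implies, by Remark~\ref{remark-intro-1}, that $\div\,\cX \equiv cN$ a.e.\ on $\R^N$ and that $(\cX(x)-\cX(y))\cdot(x-y) = c|x-y|^2$ for all $x,y$. Hence condition~\eqref{eq:c-1-c-2-condition} holds with the choices $c_2 = c$ (the inequality there becoming an equality) and $c_1 = Nc = Nc_2$; note $c_2 = c > 0$ and $c_1 = Nc_2$ lies in the half-open interval $(\tfrac{c_2 N}{2}, c_2 N]$ precisely because $c_2 N/2 < c_2 N$, so the hypotheses on $c_1,c_2$ in Corollary~\ref{sec:supercritical-nonexistence-c-1-c-2} are met.

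Next I would translate the remaining hypotheses. With $c_1/c_2 = N$, the quantity $\tfrac{c_1}{c_2} - \tfrac{N}{2} = \tfrac{N}{2}$, so the condition $0 < s < \tfrac{c_1}{c_2} - \tfrac{N}{2}$ becomes $0 < s < N/2$, i.e.\ $N > 2s$; this is implied by the assumption $N \ge 2s$ together with $s>0$ except in the borderline case $N = 2s$, which I would need to address separately (see below). Likewise $\tfrac{2c_1}{c_2} - (N+2s) = 2N - N - 2s = N - 2s$, so the exponent threshold $p > \tfrac{2N}{\frac{2c_1}{c_2} - (N+2s)}$ becomes exactly $p > \tfrac{2N}{N-2s}$, matching the hypothesis of Corollary~\ref{sec:supercritical-nonexistence-c}. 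Finally the geometric assumption $\cX \cdot \nu \ge 0$ on $\partial\Omega$ is identical in both statements. Therefore, when $N > 2s$, Corollary~\ref{sec:supercritical-nonexistence-c-1-c-2} applies verbatim and yields $u \equiv 0$.

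The main point requiring care is the borderline case $N = 2s$ (which forces $s = 1/2$, $N = 1$), since then the exponent threshold $\tfrac{2N}{N-2s}$ is infinite and Corollary~\ref{sec:supercritical-nonexistence-c-1-c-2} does not literally apply. Here I would instead invoke Corollary~\ref{sec:ros-oton-generalized-inequality} directly: with $c_1 = Nc_2$ and $N = 2s$, the coefficient of $\int_\Omega |u|^p\,dx$ in~\eqref{eq:V-prime-formula-cor-2-2} is $\tfrac{2c_2 N}{p} - [2Nc_2 - (N+2s)c_2] = \tfrac{2c_2 N}{p} - c_2(2N - 2N) = \tfrac{2c_2 N}{p}$. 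Wait --- this is positive, so~\eqref{eq:V-prime-formula-cor-2-2} gives only an upper bound on the boundary term, not the desired sign contradiction; one checks that in fact $N = 2s$ yields $N - 2s = 0$ and the relevant combination is $\tfrac{2c_2N}{p} > 0$, meaning the argument genuinely breaks down and no nonexistence can be concluded for $N = 2s$ from this method. Consequently the honest statement is that the case $N = 2s$ must simply be excluded or treated as vacuous in the sense that the hypothesis "$p > \tfrac{2N}{N-2s}$" is then unsatisfiable; I would add a remark to this effect. For all $N > 2s$ the deduction is immediate as described, and this is the only subtlety; the rest is a routine substitution of the constants $c_1 = Nc$, $c_2 = c$ into Corollary~\ref{sec:supercritical-nonexistence-c-1-c-2}.
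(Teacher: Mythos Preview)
Your proposal is correct and follows exactly the paper's approach: the paper states, immediately before Corollary~\ref{sec:supercritical-nonexistence-c}, that condition~\eqref{eq:c-condition} with $c>0$ yields~\eqref{eq:c-1-c-2-condition} with $c_2=c$ and $c_1=Nc_2$, so that Corollary~\ref{sec:supercritical-nonexistence-c-1-c-2} specializes directly to the present statement. Your observation that the borderline case $N=2s$ is vacuous (since then the hypothesis $p>\tfrac{2N}{N-2s}$ admits no $p$) is a useful clarification that the paper leaves implicit.
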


\begin{example}{\rm 
    We briefly discuss applications of Corollaries~\ref{sec:supercritical-nonexistence-c-1-c-2} and \ref{sec:supercritical-nonexistence-c} to some specific domains.\\[0.2cm]
(i) In the special case $\cX= \id$, Corollary~\ref{sec:supercritical-nonexistence-c} yields the nonexistence of nontrivial solutions of (\ref{eq:gradient-type-intro-homogeneous}) for starshaped domains, as stated in \cite[Corollary 1.3]{RX-Poh}.\\[0.2cm]
(ii) A specific example of a non-sharshaped domain $\Omega \subset \R^2$ to which Corollary~\ref{sec:supercritical-nonexistence-c} applies is given by
$$
\Omega = \{x \in \R^2\::\: x_1^2+ 10(x_2^3+x_1)^2 < 1\}
$$
Here, we choose the vector field
$$
\cX: \R^2 \to \R^2, \qquad \cX(x_1,x_2) = (5 x_1-4x_2, 5 x_2+4 x_1),
$$
so $\cX  \equiv 5 \id -4 \cY^{12}$ with the notation of Remark~\ref{remark-intro-1}. Hence (\ref{eq:c-condition}) is satisfied with $c=5$. Moreover, a careful estimate shows that $\cX \cdot \nu \ge 0$ on $\partial \Omega$ (see Figure 1).
\begin{figure}[h]
   \centering
\includegraphics[height=5cm]{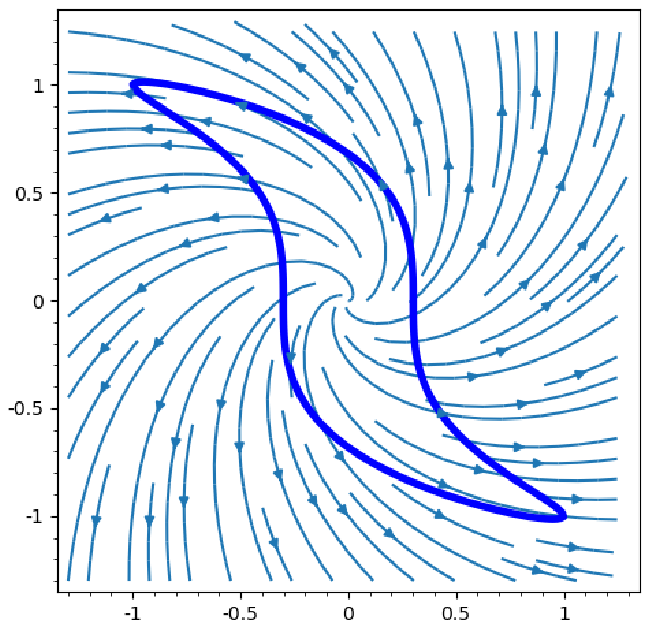}  
     	\caption{Domain $\Omega$ and flow lines of the vector field $\cX$.}
     \end{figure} 
In \cite[p. 92]{Reichel-book}, further (non-explicit) examples of non-sharshaped domains $\Omega \subset \R^N$ and vector fields $\cX$ of the form~(\ref{eq:c-cond-sufficient-con}) for some $c>0$ satisfying $\cX \cdot \nu \ge 0$ on $\partial \Omega$ are given in the context of the Dirichlet problem for the classical local equation $-\Delta u = |u|^{p-2}u$.\\[0.2cm]  
(iii) We consider $N \ge 2$ and, for $\eps \in [0,1)$, the vector field $\cX_\eps \in C^{0,1}(\R^N,\R^N)$ given by $\cX(x)= (\eps x_1,x_2,\dots,x_N)$, which satisfies $\div\, \cX_\eps \equiv  N-1+\eps$ and $\bigl(\cX(x)-\cX(y)\bigr) \cdot (x-y) \le |x-y|^2$
for $x,y \in \R^N$. Hence (\ref{eq:c-1-c-2-condition}) is satisfied with $c_2=1$ and
$c_1:= N-1+\eps \in [\frac{c_2 N}{2},c_2 N]$. Consequently, for any bounded domain $\Omega \subset \R^N$ satisfying
\begin{equation}
\label{eps-condition-outward-pointing}
\cX_\eps \cdot \nu \ge 0\qquad \text{on $\partial \Omega$,}  
\end{equation}
Corollary~\ref{sec:supercritical-nonexistence-c-1-c-2} yields nonexistence of nontrivial solutions to (\ref{eq:gradient-type-intro-homogeneous}) if $0< s < \min \{1, \frac{N}{2}-1 +\eps\}$ and $p> \frac{2N}{N-2(1+s-\eps)}$. To give specific examples, we restrict our attention to rotationally symmetric domains of the form 
      $$
      \Omega= \{x \in \R^N\::\: g(x_1^2) + \kappa \sum_{\ell=2}^Nx_\ell^2  < 0\}
      $$
      with $\kappa>0$ and a $C^{2}$-function $g: [0,\infty) \to \R$ having a simple zero at some point $r>0$ with the property that $g <0$ on $[0,r)$ and $g>0$ on $(r,\infty)$. Then $\Omega$ is a bounded domain of class $C^2$, and it can easily be shown that
      (\ref{eps-condition-outward-pointing}) holds for $\eps \ge 0$ sufficiently small, so in particular for $\eps = 0$. As an explicit example in dimension $N=2$, we consider the non-starshaped domain
      $$
      \Omega = \{x \in \R^2\::\: 3x_1^2-5x_1^4+x_1^6 -1 +4 y^4 < 0\}.
      $$
      In this case, a careful estimate shows that (\ref{eps-condition-outward-pointing}) holds with $\eps = \frac{1}{2}$ (see Figure 2 below). Hence Corollary~\ref{sec:supercritical-nonexistence-c-1-c-2} yields nonexistence of nontrivial solutions to (\ref{eq:gradient-type-intro-homogeneous}) for $s \in (0,\frac{1}{2})$ and $p> \frac{4}{1-2s}$.
\begin{figure}[h]
   \centering
\includegraphics[height=5cm]{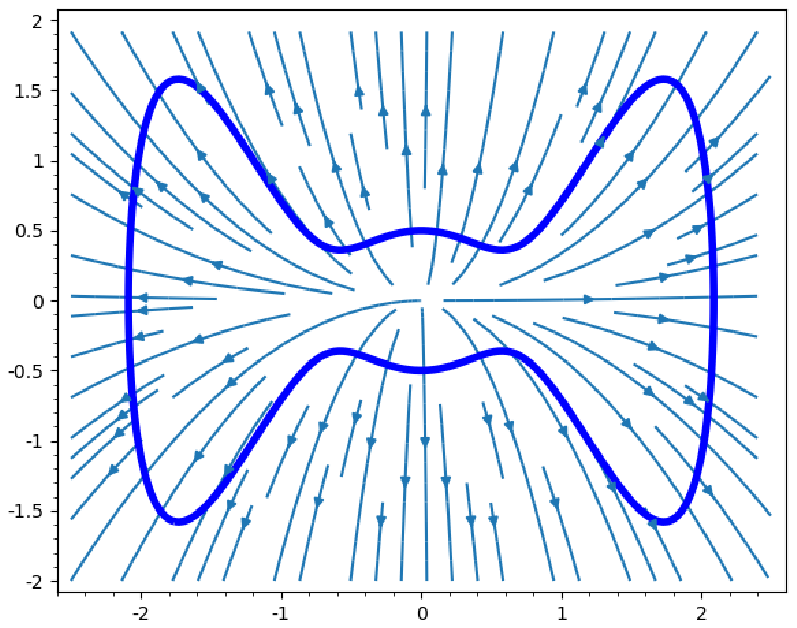}  
\caption{Domain $\Omega$ and flow lines of the vector field $\cX_{\text{\tiny $1/2$}}$.}
      \end{figure}
A related study of non-starshaped rotationally symmetric domains in the context of the second order semilinear elliptic PDEs is contained in \cite[Section 2]{mcgough}. 
}
\end{example}

Next, we briefly comment on the proof of Theorems~\ref{generalized-pohozaev} and \ref{Generalized-integration-by-parts}, which relies on some integral identities and boundary estimates obtained recently by the authors in \cite{SMT} to obtain a Hadamard formula for the rate of change of best constants in subcritical Sobolev embeddings with respect to domain deformations. In particular, this Hadamard formula applies to the first Dirichlet eigenvalue of $(-\Delta)^s$. It is one aim of the paper to indicate close connections between Hadamard formulas and Pohozaev identies in the fractional setting. In fact, both in the Hadamard formula given in \cite[Theorem 1.1]{SMT} and in fractional Pohozaev type identies, the boundary term on the LHS of (\ref{eq:V-prime-formula}) appears. Moreover, the bilinear form $\cE_{\cX}(u,v)$ related to the fractional deformation kernel $K_{\cX}$ defined in~(\ref{def-kernel-K-X}) arises as a derivative \linebreak $\frac{d}{d\eps}\big|_{\eps= 0}\, {\cE}(u \circ \Phi_\eps,v \circ \Phi_\eps)$, where ${\cE}$ is the unperturbed bilinear form given in (\ref{bilinear form}) and $\eps \to \Phi_\eps$ is a family of diffeomorphisms $\R^N \to \R^N$ with $\frac{d}{d\eps}\big|_{\eps= 0}\Phi_\eps=\cX$, see \cite{SMT} and Section~\ref{sec:from-doma-deform} below for more details. The connections will be further stressed in Theorem~\ref{smooth curves-thm} below, which is a variant of the Hadamard formula given in \cite[Corollary 1.2]{SMT} dealing with {\em arbitrary} Dirichlet eigenvalues of $(-\Delta)^s$. 

As a further application of the fractional Pohozaev identity in the form (\ref{pohozaev-rosoton-serra}), we derive, in Theorem~\ref{shape-derivative-lambda-k}, the simplicity of radial eigenvalues of $(-\Delta)^s$ in a ball or an annulus, and in Theorem~\ref{pohozaev-radial} we provide a multiplicity estimate in general (disconnected) radial open bounded sets. The proofs of these facts are extremely simple but have not been noticed in the literature up to our knowledge. As an application of Theorem~\ref{smooth curves-thm}, we also derive, in Theorem~\ref{shape-derivative-lambda-k}, a rate of change formula for radial eigenvalues with respect to radial deformations of balls or annuli.

The paper is organized as follows: Section \ref{sec:proof-gener-pohoz} is devoted to the proof of Theorem~\ref{Generalized-integration-by-parts}, Corollary~\ref{sec:ros-oton-generalized} and Corollary \ref{sec:ros-oton-generalized-inequality}. In the last section, we use the identity \eqref{eq:V-prime-formula} to derive Hadamard formula for simple eigenvalues of the Dirichlet fractional laplacian and we apply the latter to radial eigenvalues of bounded radial domains.

\section{Proof of the generalized integration by parts formula Theorem \ref{Generalized-integration-by-parts}}
\label{sec:proof-gener-pohoz}

This section is mainly devoted to the proof of Theorem~\ref{Generalized-integration-by-parts}. Throughout this section, let $\cX$ be a vector field of class $C^{0,1}$ which satisfies a global Lipschitz bound. Recall the definition in \eqref{eq-K-X-bilinear-form} of the bilinear form $\cE_{\cX}$ associated to $\cX$. We first need the following result.

\begin{lemma}\label{lem:j1kprimes-preliminary}
Let $U \in C^{\alpha}_c(\O)$ for some $\alpha>\max\{1,2s\}$. Then we have 
\begin{equation}
\label{first-kernel-formula}  
\cE_{\cX}(U,U)  = -2 \int_{\R^N} \nabla U \cdot \cX (-\Delta)^s U dx.
\end{equation}
\end{lemma}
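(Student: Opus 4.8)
The plan is to compute $\cE_{\cX}(U,U)$ directly from the definition \eqref{eq-K-X-bilinear-form} of the bilinear form and the explicit formula \eqref{def-kernel-K-X} for the kernel $K_{\cX}$, and to recognize the resulting double integral as $-2\int_{\R^N}\nabla U\cdot\cX\,(-\Delta)^sU\,dx$ after symmetrization. Writing out,
\[
\cE_{\cX}(U,U)=\frac{c_{N,s}}{2}\int_{\R^N}\!\int_{\R^N}\bigl(U(x)-U(y)\bigr)^2\Bigl[\bigl(\div\,\cX(x)+\div\,\cX(y)\bigr)-(N+2s)\frac{(\cX(x)-\cX(y))\cdot(x-y)}{|x-y|^2}\Bigr]\frac{dx\,dy}{|x-y|^{N+2s}},
\]
which I would split into a "divergence part" $I_1$ and a "radial part" $I_2$. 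For $I_1$, using symmetry in $x,y$ and the fact that $\int_{\R^N}\frac{(U(x)-U(y))^2}{|x-y|^{N+2s}}\,dy$ is, up to the constant, $2U(x)(-\Delta)^sU(x)$ (valid pointwise since $U\in C^\alpha_c(\O)$ with $\alpha>2s$, so $(-\Delta)^sU$ is a bona fide function and Fubini applies because $U$ is compactly supported and the off-diagonal kernel is integrable against $(U(x)-U(y))^2$), one gets $I_1 = 2\int_{\R^N}\div\,\cX(x)\,U(x)(-\Delta)^sU(x)\,dx$ — or rather I would be careful and keep $I_1$ in the symmetric double-integral form until the end. For $I_2$, the key is the change of variables $y\mapsto x+z$, $y\mapsto x-z$ combined with the second-difference form of $(-\Delta)^s$ displayed in the introduction.

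The main technical step is a pointwise integration-by-parts identity on $\R^N$: I claim that for $U\in C^\alpha_c(\O)$,
\[
\int_{\R^N}\nabla U\cdot\cX\,(-\Delta)^sU\,dx
= \frac{c_{N,s}}{2}\int_{\R^N}\!\int_{\R^N}\frac{\bigl(\nabla U(x)\cdot\cX(x)\bigr)\bigl(U(x)-U(y)\bigr)}{|x-y|^{N+2s}}\,dy\,dx,
\]
and then to symmetrize the right-hand side in $x\leftrightarrow y$. Upon symmetrization, $\nabla U(x)\cdot\cX(x)\,(U(x)-U(y))$ becomes $\tfrac12\bigl(\nabla U(x)\cdot\cX(x)+\nabla U(y)\cdot\cX(y)\bigr)(U(x)-U(y))$, and I would then write $U(x)-U(y)=\int_0^1\nabla U(y+t(x-y))\cdot(x-y)\,dt$ (or rather use a first-order Taylor expansion plus a remainder) to match this against the radial factor $(\cX(x)-\cX(y))\cdot(x-y)/|x-y|^2$ appearing in $I_2$. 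The cleanest route is probably the reverse: start from $\cE_{\cX}(U,U)$, perform in $I_2$ the substitution that recenters the singularity, and recognize that the combination of the $\div$ terms and the radial term is precisely what you obtain by differentiating $U(\Phi_\eps(x))$ and using the change-of-variables Jacobian $\det d\Phi_\eps = 1+\eps\,\div\,\cX+o(\eps)$ — i.e. reproduce, at the level of a single fixed $U$, the computation of $\frac{d}{d\eps}\big|_{\eps=0}\cE(U\circ\Phi_\eps,U\circ\Phi_\eps)$ alluded to in the introduction, where $\Phi_\eps$ is the flow of $\cX$. Since $\cE(U\circ\Phi_\eps,U\circ\Phi_\eps)$ can, after change of variables $x=\Phi_\eps(\xi)$, $y=\Phi_\eps(\eta)$, be rewritten with the deformed kernel, differentiating in $\eps$ yields $\cE_{\cX}(U,U)$ on one side; on the other side, writing $\cE(v,v)=2\int v(-\Delta)^sv$ for $v=U\circ\Phi_\eps\in C^\alpha_c$ and differentiating under the integral (legitimate because everything is compactly supported and $\eps\mapsto U\circ\Phi_\eps$ is $C^1$ into a Hölder space on which $(-\Delta)^s$ acts continuously) gives $2\int \frac{d}{d\eps}\big|_0(U\circ\Phi_\eps)(-\Delta)^sU + 2\int U(-\Delta)^s\frac{d}{d\eps}\big|_0(U\circ\Phi_\eps)$, and both terms equal $-\int\nabla U\cdot\cX\,(-\Delta)^sU$ after noting $\frac{d}{d\eps}\big|_0(U\circ\Phi_\eps)=\nabla U\cdot\cX$ and using self-adjointness of $(-\Delta)^s$ on compactly supported Hölder functions.

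I expect the main obstacle to be justifying the interchange of $\frac{d}{d\eps}$ with the integrals and with $(-\Delta)^s$, i.e. establishing that $\eps\mapsto U\circ\Phi_\eps$ is continuously differentiable as a map into $C^\beta_c(\R^N)$ for some $\beta$ with $\max\{1,2s\}\ge\beta>2s$ (one loses a bit of Hölder regularity under composition and differentiation, which is why the hypothesis $\alpha>\max\{1,2s\}$ is comfortably strict), and that on such a space $(-\Delta)^s$ is a bounded operator into $C(\R^N)$ with the self-adjointness $\int \varphi(-\Delta)^s\psi=\int\psi(-\Delta)^s\varphi$ valid. Once these functional-analytic points are in place — all of which follow from the hypotheses and standard pointwise estimates for the fractional Laplacian of compactly supported Hölder functions — the identity \eqref{first-kernel-formula} drops out. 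Alternatively, if one prefers to avoid the flow entirely, the same conclusion can be reached by the direct symmetrization argument above, where the only subtlety is a dominated-convergence / Fubini justification near the diagonal using $|U(x)-U(y)|\lesssim\min\{1,|x-y|\}$ and $|\nabla U(x)-\nabla U(y)|\lesssim|x-y|^{\alpha-1}$ together with $\alpha-1>0$; this makes the inner $z$-integrals of the radial part absolutely convergent and legitimizes recentering the singularity. I would present the flow-based proof as the conceptual one and relegate the pointwise bounds to the direct verification.
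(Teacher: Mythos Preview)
Your flow-based argument is essentially correct and constitutes a genuinely different route from the paper's. The paper proceeds by a direct integration by parts: it observes that the integrand in $\cE_{\cX}(U,U)$, after exploiting the $x\leftrightarrow y$ symmetry, is exactly $(U(x)-U(y))^2\,\div_x\bigl(|x-y|^{-N-2s}\cX(x)\bigr)$, then applies the divergence theorem in $x$ on $\R^N\setminus\overline{B_\mu(y)}$, shows the boundary term on $\partial B_\mu(y)$ vanishes as $\mu\to 0$ (using only the Lipschitz bounds on $U$ and $\cX$), and recognizes the remaining bulk term as $-2\int\nabla U\cdot\cX\,(-\Delta)^sU$ after Fubini and a translation. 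This is more elementary---no flow, no differentiation under the integral---and uses the hypothesis $\alpha>\max\{1,2s\}$ only through the pointwise bounds $|U(x)-U(y)|\lesssim|x-y|$ and the pointwise well-definedness of $(-\Delta)^sU$. Your deformation approach, computing $\partial_\eps|_0\,\cE(U\circ\Phi_\eps,U\circ\Phi_\eps)$ two ways, is more conceptual and explains \emph{why} $K_{\cX}$ has the form it does; the paper in fact uses exactly this computation later (Lemma~\ref{diff-Riesz-1}) but not to prove the present lemma.

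A few bookkeeping points to fix when you write it up: (a) $\cE(v,v)=\int v\,(-\Delta)^sv$, not $2\int v\,(-\Delta)^sv$; (b) to strip $\Phi_\eps$ off $U$ you need the substitution $x=\Phi_\eps^{-1}(\xi)$, not $x=\Phi_\eps(\xi)$, and this flips the sign of the $\eps$-derivative of the kernel, giving $-\cE_{\cX}(U,U)$ on that side---which then matches the $+2\int(\nabla U\cdot\cX)(-\Delta)^sU$ you get on the other side and produces the correct identity; (c) you do not need $(-\Delta)^s(\nabla U\cdot\cX)$ pointwise, since after differentiating the bilinear form you land directly on $2\cE(U,\nabla U\cdot\cX)=2\int(\nabla U\cdot\cX)(-\Delta)^sU$, so the regularity concern you flag about $\nabla U\cdot\cX$ being only $C^{\alpha-1}$ is not an obstruction. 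Your first ``direct symmetrization'' sketch, by contrast, is too vague to stand on its own; the Taylor-expansion matching you describe does not obviously close, and the paper's divergence-theorem trick is the clean way to execute the direct route.
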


A similar statement has been proved under slightly stronger regularity assumptions on $U$ in \cite[Lemma 4.2]{SMT}. Here we give a somewhat simpler proof which is also consistent with the present notation. 

\begin{proof}
By symmetry of the kernel and Fubini's theorem, we have 
\begin{align*}
  &\cE_{\cX}(U,U)\\
  &=               \frac{c_{N,s}}{2}\int_{\R^{2N}} (U(x)-U(y))^2 \Bigl[\frac{\div\, \cX(x) + \div\, \cX(y)}{|x-y|^{N+2s}}- (N+2s)\frac{ \bigl(\cX(x)-\cX(y)\bigr)\cdot (x-y)}{|x-y|^{N+2s+2}}\Bigr] dxdy\\
&= c_{N,s}\lim_{\mu \to 0} \int_{\R^N} \int_{\R^N \setminus \overline{B_\mu(y)}}(U(x)-U(y))^2 \Bigl[\frac{\textrm{div}\,\cX(x)}{|x-y|^{N+2s}} -(N+2s) \frac{(x-y)\cdot \cX(x)}{|x-y|^{N+2s+2}}dxdy \nonumber\\
  &= c_{N,s}\lim_{\mu \to 0} \int_{\R^N} \int_{\R^N \setminus \overline{B_\mu(y)}}(U(x)-U(y))^2 \,\nabla_x \Bigl(|x-y|^{-N-2s} \cX(x)\Bigr)\,dx dy. 
\end{align*}
Applying, for fixed $y \in \R^N$ and $\mu>0$, the divergence theorem in the domain $\R^N \setminus \overline{B_\mu(y)}$, we obtain 
\begin{align}
  &\int_{\R^{2N}} K_{\cX}(x,y)(U(x)-U(y))^2\,dxdy \label{poho-proof-1}\\
  &=c_{N,s}\lim_{\mu\to 0}     \int_{\R^N} \int_{\partial B_\mu(y)}  {(U(x)-U(y))^2} \frac{y-x}{|x-y|^{N+2s+1}}\cdot \cX(x) \, d\s(y)\,dx \nonumber\\
  &- 2c_{N,s}\lim_{\mu\to 0}
    \int_{\R^N} \int_{\R^N \setminus \overline{B_\mu(y)}} \frac{(U(x)-U(y))\n U(x) \cdot \cX(x) }{|x-y|^{N+2s}} dxdy\Bigr] = : I_1 - 2 I_2.\nonumber
\end{align}
Since $U \in C^{\alpha}_c(\Omega)$ for some $\alpha>2s$, we may use Fubini's theorem and the change of variable $(x,y) \mapsto (y,y-x)$ to see that
\begin{align}
  &I_2  =c_{N,s} \lim_{\mu\to 0}\int_{\R^N \setminus \overline{B_\mu(0)}} \int_{\R^N}  \frac{U(y)-U(y-x)}{|x|^{N+2s}}\n U(y) \cdot \cX(y)dy dx \nonumber\\
    &=\frac{c_{N,s}}{2} \int_{\R^{2N}}  \frac{2 U(y)-U(y-x)-U(y+x)}{|x|^{N+2s}}\n U(y) \cdot \cX(y)dy dx \nonumber\\
  &= \frac{c_{N,s}}{2} \int_{\R^{N}} \n U(y) \cdot \cX(y) \int_{\R^N} \frac{2 U(y)-U(y-x)-U(y+x)}{|x|^{N+2s}}dx dy \nonumber\\
 &=  \int_{\R^N}   \n U(y) \cdot \cX(y)(-\Delta)^sU(y) dy. \label{poho-proof-2}
\end{align}
Moreover, also by Fubini's theorem, we have
\begin{equation}
I_1 = \frac{1}{2} \lim_{\mu\to 0}\mu^{-N-1-2s}  \int_{|x-y|= \mu}  {(U(x)-U(y))^2} (y-x) \cdot (\cX(x)-\cX(y))    \, d\s(x,y) \label{poho-proof-3}
\end{equation}
Moreover, since $U$ is compactly supported, we may fix $R>0$ large enough such that
$(U(x)-U(y))^2 = 0$ for all $x,y \in B_R(0)$ with $|x-y|<1$.
Setting $N_\mu:= \{(x,y) \in B_R(0) \times B_R(0) \::\: |x-y|= \mu\}$ for $0<\mu<1$ and using that
$U, \cX \in C^{0,1}(\R^N)$, we thus deduce that 
\begin{align*}
  &\mu^{-N-1-2s} \int_{|x-y|= \mu}  {(U(x)-U(y))^2} (y-x) \cdot (\cX(x)-\cX(y))    \, d\s(x,y)\nonumber\\
  &= \mu^{-N-1-2s} \int_{N_\mu}  {(U(x)-U(y))^2} (y-x) \cdot (\cX(x)-\cX(y))\, d\s(x,y) = O(\mu^{3-1-2s}) \to 0,
\end{align*}
as $\mu \to 0$, since the $2N-1$-dimensional measure of the set $N_\mu$ is of order $O(N-1)$ as $\mu \to 0$. Thus \eqref{poho-proof-3} yields $I_1=0$, and together with (\ref{poho-proof-1}) and (\ref{poho-proof-2}) the claim follows.
\end{proof}
%

Next we consider $u \in \cH^s_0(\Omega)$ satisfying the assumptions of Theorem \ref{Generalized-integration-by-parts}, and we recall that
$u \in C_0^s(\overline \Omega) \cap C^\al_{loc}(\Omega)$ with $\al>\max(1,2s)$ by the standard regularity theory. We cannot apply Lemma~\ref{lem:j1kprimes-preliminary} directly to $u$ since $u$ does not have compact support. We therefore consider inner approximations $U_k:=u \zeta_k$ for $k \in \N$ for suitable functions $\z_k \in C^{1,1}(\R^N)$. To define $\z_k$, we note that, since $\Omega$ is of class $C^{1,1}$ by assumption, the signed distance function to $\partial \Omega$ is also of class $C^{1,1}$ in a neighborhood of $\de \O$. We therefore may consider a function $\tilde \delta \in C^{1,1}(\R^N)$ which is is positive in $\O$, negative in $\R^N\setminus \overline\O$ and coincides with the signed distance to $\partial \Omega$ in a neighborhood of $\partial \Omega$. We then define $\z_k \in C^{1,1}(\R^N)$ by 
$$
\z_k(x)= 1-\rho(k \tilde \delta(x)). 
$$
where $\rho\in C^\infty_c(-2,2)$ is fixed with $0 \le \rho \le 1$ and $\rho\equiv 1$ on $(-1,1)$. By \cite[Lemma 2.1 and 2.2]{SMT}, we have, for arbitrary $u \in \calH^s_0(\Omega)$,
\begin{equation}
  \label{eq:limit-prop-kernel}
u \zeta_k \to u \qquad \text{in $\calH^s_0(\Omega)$}\quad\text{and}\quad \cE_{\cX}(u\z_k,u\z_k)\to  \cE_{\cX}(u,u) \quad \text{as}\quad k \to \infty.
\end{equation}
Indeed, the latter is true for more general kernel functions $(x,y) \mapsto K(x,y)$ in place of $K_{\cX}$ as long as $(x,y) \mapsto K(x,y)|x-y|^{N+2s}$ defines a function in $L^\infty(\R^N \times \R^N)$. To complete the proof of Theorem~\ref{Generalized-integration-by-parts}, we need, as a final tool taken from\cite{SMT}, the following limit identity.

\begin{proposition}
  \label{prop-limit-identity}
  We have\footnote[1]{Note that sign of the RHS of (\ref{eq:limit-identity}) differs from \cite{SMT} since the inner unit normal is used in \cite{SMT}}
\begin{equation}
  \label{eq:limit-identity}
\lim_{k\to \infty}\int_{\Omega}\n (u \z_k) \cdot \cX \Bigl( u   \Ds \z_k  -I(u,\z_k)\Bigr)\,dx= \frac{\Gamma(1+s)^2}{2}  \int_{\de\O} \psi_u^2\, \cX\cdot \nu \, dx,
\end{equation}
where 
\be\label{eq:def-I-u-v-new-section}
I(u, \z_k)(x):= \frac{c_{N,s}}{2}\int_{\R^N}\frac{(u(x)-u(y))(\z_k(x)-\z_k(y))}{|x-y|^{N+2s}}\, dy.
\ee
\end{proposition}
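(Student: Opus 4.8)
The plan is to reduce Proposition~\ref{prop-limit-identity} to a purely boundary-layer computation, localized near $\partial\Omega$. First I would observe that the integrand $u\,\Ds\z_k - I(u,\z_k)$ vanishes identically where $\z_k\equiv 1$, that is, outside the set $A_k:=\{x:\tilde\delta(x)<2/k\}$, a shrinking tubular neighborhood of $\partial\Omega$ of width $O(1/k)$. So the integral in \eqref{eq:limit-identity} is supported on $A_k$, and on this set we have the leading behavior $u\sim\psi_u\,\delta^s$ and $\n u\sim s\,\psi_u\,\delta^{s-1}\n\delta$ by the $C^\alpha$-regularity of $\psi_u=u/\delta^s$ up to the boundary. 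The key is to compute the asymptotics of $u(x)\Ds\z_k(x)-I(u,\z_k)(x)$ for $x\in A_k$. Here I would use the explicit one-dimensional computation: for the model half-space profile $x\mapsto (x_n)_+^s$ one has clean formulas for the fractional Laplacian of $\rho(k\,\cdot)$-type cutoffs, and the combination $u\,\Ds\z_k - I(u,\z_k)$ is designed precisely so that the singular $\delta^{-s}$-type terms cancel and one is left with a quantity whose product with $\n(u\z_k)\cdot\cX$ integrates, after the change of variables $x\mapsto(\sigma,t)$ with $\sigma\in\partial\Omega$ and $t=\delta(x)$ and rescaling $t=\tau/k$, to the boundary integral with the universal constant $\Gamma(1+s)^2/2$.

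Concretely, the steps in order would be: (1) restrict attention to $A_k$ and split $\R^N$ in the definition of $I(u,\z_k)$ and of $\Ds\z_k$ into a near region $|x-y|\lesssim 1/k$ and a far region; show the far-region contributions are lower order because there $\z_k(x)-\z_k(y)$ and $u(x)-u(y)$ are controlled and the kernel is integrable away from the diagonal, uniformly in $k$. (2) In the near region, freeze the geometry: straighten $\partial\Omega$ locally, replace $u$ by $\psi_u(\sigma)\,t_+^s$ and $\z_k$ by $1-\rho(kt)$ up to errors controlled by the Hölder moduli of $\psi_u$, $\n\tilde\delta$ and $\rho'$, and by the $C^{1,1}$-regularity of $\partial\Omega$. (3) Perform the one-dimensional reduction: the transversal integrals produce exactly the constant appearing in the Pohozaev identity — this is where the factor $\Gamma(1+s)^2$ enters, via the same computation used in \cite{RX-Poh,SMT} for $(-\Delta)^s(t_+^s)$ and related integrals, or one simply quotes the corresponding estimate from \cite{SMT}. (4) Integrate over $\sigma\in\partial\Omega$ and pass to the limit $k\to\infty$, using dominated convergence with a bound provided by the uniform estimates from step (1)–(2).

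I expect the main obstacle to be step (1)–(2): proving that all the error terms — arising from (a) the curvature of $\partial\Omega$, (b) the Hölder (rather than Lipschitz or smooth) regularity of $\psi_u$, and (c) the interaction between the nonlocal tails and the cutoff — vanish in the limit, with estimates that are uniform in $k$. The delicate point is that $\Ds\z_k$ itself is of size $\sim k^{2s}$ near $\partial\Omega$ while $u\sim k^{-s}$, so the product $u\,\Ds\z_k$ is of size $k^s$, and only after the subtraction of $I(u,\z_k)$ and integration against the measure $\n(u\z_k)\cdot\cX\,dx$ over $A_k$ (whose volume is $O(1/k)$) does one get an $O(1)$ quantity; keeping track of the cancellations quantitatively is the technical heart. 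Since, however, this is exactly the content of the boundary estimates established in \cite{SMT} (restated here in the present normalization, with the sign change noted in the footnote), I would organize the proof to invoke those estimates as black boxes wherever possible, and only redo the short final one-dimensional integral that fixes the constant $\Gamma(1+s)^2/2$.
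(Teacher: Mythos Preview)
Your proposal is correct and aligns with the paper's approach: the paper's proof is in fact even shorter than your outline, consisting entirely of the observation that the limit identity is already established in \cite[Prop.~2.4 and Remark~2.5]{SMT} for functions $u\in C_0^s(\overline\Omega)\cap C^1_{loc}(\Omega)$ satisfying $\psi_u\in C^\alpha(\overline\Omega)$ and $\delta^{1-\alpha}\nabla\psi_u\in L^\infty$ near $\partial\Omega$, together with the verification that these hypotheses hold under the assumptions of Theorem~\ref{Generalized-integration-by-parts} via the regularity theory of \cite{RX} and the gradient estimate of \cite{FS-2019}. Your detailed boundary-layer sketch is essentially a description of what happens \emph{inside} \cite{SMT}; the only point you underplay is the explicit verification of the gradient bound on $\psi_u$, which the paper isolates as a separate ingredient coming from \cite{FS-2019}.
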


\begin{proof}
As stated in \cite[Prop. 2.4 and Remark 2.5]{SMT}, the identity (\ref{eq:limit-identity}) holds for functions $u \in C_0^s(\overline \Omega) \cap C^{1}_{loc}(\Omega)$ satisfying, for some $\alpha>0$, the following regularity properties:
\be \label{eq:Bnd-reg-v-eps}
\psi_u \in C^{\a}(\ov\O) \qquad \text{and}\qquad 
 \delta^{1-\a} \n \psi_u \quad \text{is bounded in a neighborhood of $\partial \Omega$.}
\ee
The first property is satisfied under the assumption of Theorem~\ref{Generalized-integration-by-parts} by the regularity theory in \cite{RX}. Moreover, the gradient estimate for $\psi_u$ holds as well in this case, as proved in \cite{FS-2019}.

Hence the identity (\ref{eq:limit-identity}) holds if $u$ satisfies the assumptions of Theorem~\ref{Generalized-integration-by-parts}\footnote[2]{In \cite[Prop. 2.4]{SMT}, the property $u \z_k \in C^{1,1}_c(\Omega)$ for all $k$ is unnecessarily stated as an assumption. Only the bounds \ref{eq:Bnd-reg-v-eps} and the $C^{1,1}$-regularity of the functions $\z_k$ are used in the proof.} 
\end{proof}

We may now complete the
\begin{proof}[Proof of Theorem \ref{Generalized-integration-by-parts} ]
Applying Lemma~\ref{lem:j1kprimes-preliminary} to $U_k=u\z_k$, we find that 
$$
\cE_{\cX}(U_k,U_k)= -2 \int_{\R^N} \nabla U_k \cdot \cX (-\Delta)^s U_k dx= -2 \int_{\Omega} \nabla U_k \cdot \cX (-\Delta)^s U_k dx  \qquad \text{for $k \in \N$.}
$$
By the standard product rule for the fractional Laplacian, we have
$$
\Ds U_k=\z_k \Ds u + u\Ds \z_k -I(u, \z_k)\qquad \text{in $\Omega$}
$$
with $I(u, \z_k)$ given in \ref{eq:def-I-u-v-new-section}. 
We thus obtain
\begin{align}
\cE_{\cX}(U_k,U_k)&=-2 \int_{\O}   \z_k\n U_k \cdot \cX\Ds u\, dx -2 \int_{\R^N}\n U_k \cdot \cX \Bigl( u   \Ds \z_k  - I(u, \z_k)\Bigr)\,dx,\nonumber\\
&=-2 \int_{\O}   \z_k^2\n u \cdot \cX\Ds u\, dx -2 \int_{\R^N}\n U_k \cdot \cX \Bigl( u   \Ds \z_k  - I(u, \z_k)\Bigr)\,dx\nonumber\\
&-2\int_{\O}u\z_k\n\z_k\cdot\cX\Ds u\,dx.\nonumber
\end{align}
Since $\Ds u\in L^\infty(\O)$ and $u\in C^s_0(\ov\O)$, we easily find, by definition of $\z_k$, that 
\be\label{eqq}
\int_{\O}u\z_k\n\z_k\cdot\cX\Ds u\,dx\to 0\qquad\text{as}\qquad k\to\infty.
\ee
Taking the limit into the identity above and using \eqref{eq:limit-prop-kernel}, \eqref{eq:limit-identity} and \eqref{eqq} we deduce
\be \label{eq-limit-c-k}
\calE_{\cX}(u,u)=\lim_{k\to\infty}c_k(u)=-2\int_{\O}\n u\cdot\cX\Ds u\,dx-\G^2(1+s)\int_{\de\O} \psi_u^2\, \cX\cdot \nu \, dx.
\ee
The proof is finished.
\end{proof}

Next we give the

\begin{proof}[Proof of Corollary~\ref{sec:ros-oton-generalized-inequality}]
As noted in Remark~\ref{remark-intro-1}, it follows from assumption~(\ref{eq:c-condition}) that 
$$
\div \cX \equiv cN\qquad \text{and therefore}\qquad K_{\cX}(x,y) = \frac{c_{N,s}c}{2}(N-2s)|x-y|^{-N-2s}\quad \text{for $x,y \in \R^N$.}
$$
Hence for every weak solution $u \in \cH^s_0(\Omega) \cap L^\infty(\Omega)$ of (\ref{eq:gradient-type-intro}) we have 
$$
\calE_{\cX}(u,u)= c(N-2s){\cE}(u,u)= c (N-2s) \int_{\Omega}f(u)u\,dx. 
$$
Therefore (\ref{eq:V-prime-formula}) reduces to (\ref{eq:V-prime-formula-cor-1}) in this case, as claimed.
\end{proof}

We close this section with the

\begin{proof}[Proof of Corollary \ref{sec:ros-oton-generalized-inequality}]
  We first note that the second condition in (\ref{eq:c-1-c-2-condition}) implies that 
  \begin{equation}
    \label{eq:c-condition-variant-1}
\bigl(d\cX(y)h\bigr) \cdot h \le  c_2|h|^2 \qquad \text{for a.e. $y \in \R^N$ and every $h \in \R^N$.}
  \end{equation}
  Applying (\ref{eq:c-condition-variant-1}) to the coordinate vectors $e_1,\dots,e_N \in \R^N$ and combining the result with the first condition in (\ref{eq:c-1-c-2-condition}), we deduce that
  \begin{equation}
    \label{eq:two-sided-divergence-ineq}
  c_1 \le \div \,\cX \le  c_2 N \qquad \text{a.e. on $\R^N$.}
  \end{equation}
  In particular, this implies that
  $$
  K_{\cX}(x,y) \ge \frac{c_{N,s}}{2}\: \frac{2 c_1 - c_2(N+2s)}{|x-y|^{N+2s}}\qquad \text{for $x,y \in \R^N$.}
$$
Hence for every weak solution $u \in \cH^s_0(\Omega) \cap L^\infty(\Omega)$ of (\ref{eq:gradient-type-intro}) we have 
\begin{align*}
  \cE_{\cX}(u,u) &\ge \bigl[2 c_1 - c_2(N+2s) \bigr]{\cE}(u,u)= \bigl[2 c_1 - c_2(N+2s) \bigr] \int_{\Omega}f(u)u\,dx.
\end{align*}
Since $F(u)$ is nonnegative in $\Omega$ by assumption, we thus conclude from (\ref{eq:V-prime-formula}) and (\ref{eq:two-sided-divergence-ineq}) that 
\begin{align*}
  \Gamma(1+s)^2 \int_{\partial \Omega} \psi_u^2\,  \cX \cdot \nu \,dx   &=
                                                                          2\int_{\Omega} F(u) \div \, \cX\,dx -  \cE_{\cX}(u,u)\\
&\le \int_{\Omega} \Bigl(2c_2N F(u)-\bigl[ 2 c_1 - c_2(N+2s) \bigr] f(u)u\Bigr)dx,  
\end{align*}
as claimed in (\ref{eq:V-prime-formula-cor-2}). Moreover, (\ref{eq:V-prime-formula-cor-2-2}) is a direct consequence of (\ref{eq:V-prime-formula-cor-2}).
\end{proof}

\section{A Hadamard formula for the fractional Dirichlet eigenvalue problem}
\label{sec:from-doma-deform}

Let $\Omega \subset \R^N$ denote a bounded open set with $C^{1,1}$-boundary.
From now on we fix $\eps_0>0$ and a family of deformations $\{\Phi_\eps\}_{\eps \in (-\eps_0,\eps_0)}$ with the following properties:
\be\label{eq:def-diffeom}
\begin{aligned}
&\text{$\Phi_\eps \in C^{1,1}(\R^N; \R^N)$ for $\eps  \in (-1,1)$, $\Phi_0= \id_{\R^N}$, and}\\
&\text{the map $(-1,1) \to C^{1,1}(\R^N,\R^N)$, $\eps \to \Phi_\eps$ is of class $C^2$.}  
 \end{aligned}
 \ee
By making $\eps_0>0$ smaller if necessary, we may then assume that $\Phi_\e:\R^N\to \R^N$ is a global diffeomorphism for $\eps \in (-\eps_0,\eps_0)$, see e.g. \cite[Chapter 4.1]{delfour-zolesio}.
For $\e \in (-\eps_0,\eps_0)$, we write $\Omega_\eps = \Phi_\eps(\Omega)$.

The aim of this section is to establish the following rate of change formula for Dirichlet eigenvalues of the fractional Laplacian with respect to the domain deformation given by $\Phi_\eps$.

\begin{theorem}
  \label{smooth curves-thm}
  Consider a $C^1$-curve
$$
  (-\eps_0,\eps_0) \to \cH^s_0(\Omega), \qquad \eps \mapsto u_\eps
$$
with the property that, for every $\eps \in (-\eps_0,\eps_0)$, the function
$$
v_\eps:= u_\eps \circ \Phi_\eps^{-1} \quad \in \cH^s_0(\Omega_\eps)
$$
is a nontrivial weak solution of the Dirichlet eigenvalue problem
\begin{equation}
  \label{eq:perturbed-eigenvalue}
(-\Delta)^s v_\eps = \lambda(\eps) v_\eps \quad \text{in $\O_\eps$,}\qquad v_\eps \equiv 0 \quad \text{in $\O_\eps^c$},
\end{equation}
for some $\lambda(\eps) \in \R$. Then the function $\eps \mapsto \lambda_\eps$ is of class $C^1$ on $(-\eps_0,\eps_0)$, and we have the identity
  $$
  \partial_\eps \Big|_{\eps=0} \lambda(\eps) = -   \frac{\Gamma(1+s)^2 \int_{\partial \Omega} \psi_u^2\,  \cX \cdot \nu \,dx}{\int_{\Omega}u^2\,dx}
  $$
  with $u:= u_0$ and the vector field
  \begin{equation}
    \label{eq:def-X-rate-of-change}
  \cX:= \partial_{\eps} \Big|_{\eps = 0}\Phi_\eps \in C^{1,1}(\R^N,\R^N).  
  \end{equation}
\end{theorem}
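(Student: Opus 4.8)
The plan is to transplant the perturbed eigenvalue problem back to the fixed domain $\Omega$ via the diffeomorphisms $\Phi_\eps$, write the eigenvalue $\lambda(\eps)$ as a Rayleigh-type quotient of $\eps$-dependent bilinear forms evaluated on the fixed-support curve $\eps \mapsto u_\eps$, and then differentiate this quotient at $\eps = 0$. Concretely, since $v_\eps = u_\eps \circ \Phi_\eps^{-1}$ solves \eqref{eq:perturbed-eigenvalue}, testing with $v_\eps$ itself gives $\lambda(\eps) = {\cE}(v_\eps,v_\eps)/\int_{\Omega_\eps} v_\eps^2\,dx$. Changing variables $x = \Phi_\eps(y)$ in both integrals, one obtains $\lambda(\eps) = a_\eps(u_\eps,u_\eps)/b_\eps(u_\eps,u_\eps)$, where $a_\eps(u,u) := {\cE}(u\circ\Phi_\eps^{-1}, u\circ\Phi_\eps^{-1})$ and $b_\eps(u,u) := \int_\Omega u^2 \, J_\eps\,dy$ with $J_\eps = |\det d\Phi_\eps|$. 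The first key input is that, as recalled in the discussion after \eqref{def-kernel-K-X} (and established in \cite{SMT}), the map $\eps \mapsto a_\eps(u,u)$ is $C^1$ with $\frac{d}{d\eps}\big|_{\eps=0} a_\eps(u,u) = \calE_{\cX}(u,u)$; the Jacobian term is elementary, with $\frac{d}{d\eps}\big|_{\eps=0} b_\eps(u,u) = \int_\Omega u^2 \, \div\,\cX\,dy$.

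Next I would address the regularity of $\eps \mapsto \lambda(\eps)$. Since $\eps \mapsto u_\eps$ is a $C^1$ curve in $\cH^s_0(\Omega)$ by hypothesis, and both $a_\eps$ and $b_\eps$ depend jointly $C^1$ on $(\eps, u)$ (the $\eps$-dependence through $\Phi_\eps$ and $J_\eps$ being $C^1$ by \eqref{eq:def-diffeom} and the $SMT$ estimates, the $u$-dependence being quadratic hence smooth), the composite $\eps \mapsto a_\eps(u_\eps,u_\eps)$ and $\eps \mapsto b_\eps(u_\eps,u_\eps)$ are $C^1$; as $b_0(u,u) = \int_\Omega u^2\,dy = \|u\|_{L^2}^2 \neq 0$ because $u_0$ is nontrivial, the quotient is $C^1$ near $\eps = 0$. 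This disposes of the first assertion.

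The final and main computational step is the actual differentiation at $\eps = 0$. By the quotient rule,
\begin{equation*}
\partial_\eps\Big|_{\eps=0}\lambda(\eps) = \frac{1}{\|u\|_{L^2}^2}\Bigl[\frac{d}{d\eps}\Big|_{\eps=0} a_\eps(u_\eps,u_\eps) - \lambda(0)\,\frac{d}{d\eps}\Big|_{\eps=0} b_\eps(u_\eps,u_\eps)\Bigr].
\end{equation*}
The terms coming from differentiating $u_\eps$ (rather than the $\eps$-dependence of the forms) combine to $2\,{\cE}(u, \dot u) - 2\lambda(0)\int_\Omega u\,\dot u\,dy$ where $\dot u = \partial_\eps|_{\eps=0}u_\eps$, and this vanishes because $u = u_0$ is a weak solution of the unperturbed eigenvalue problem with eigenvalue $\lambda(0)$ (so ${\cE}(u,\cdot) = \lambda(0)\int_\Omega u\,\cdot\,dy$ on $\cH^s_0(\Omega)$, and $\dot u$ is an admissible test function). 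What remains is exactly the "shape" part:
\begin{equation*}
\partial_\eps\Big|_{\eps=0}\lambda(\eps) = \frac{1}{\|u\|_{L^2}^2}\Bigl[\calE_{\cX}(u,u) - \lambda(0)\int_\Omega u^2\,\div\,\cX\,dy\Bigr].
\end{equation*}
Now I invoke Theorem~\ref{generalized-pohozaev} applied to the equation $(-\Delta)^s u = \lambda(0) u$, i.e. $f(t) = \lambda(0) t$, $F(t) = \tfrac{\lambda(0)}{2}t^2$: identity \eqref{eq:V-prime-formula} reads $\Gamma(1+s)^2\int_{\partial\Omega}\psi_u^2\,\cX\cdot\nu\,dx = \lambda(0)\int_\Omega u^2\,\div\,\cX\,dx - \calE_{\cX}(u,u)$. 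Substituting gives the claimed formula. The main obstacle is not conceptual but bookkeeping: justifying that the "$\dot u$-terms" genuinely cancel requires knowing $\dot u \in \cH^s_0(\Omega)$ (immediate from the $C^1$-curve hypothesis) and carefully separating the two sources of $\eps$-dependence in $a_\eps(u_\eps,u_\eps)$ and $b_\eps(u_\eps,u_\eps)$ — this is where one must be careful that the chain rule is applied to jointly-$C^1$ maps, which is exactly what the regularity statement in \cite{SMT} provides.
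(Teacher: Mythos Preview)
Your proposal is correct and follows essentially the same route as the paper: the paper also pulls the eigenvalue equation back to $\Omega$ via $\Phi_\eps$, records (as its Lemma~\ref{diff-Riesz-1}, which is the \cite{SMT} differentiability you cite) that $\eps \mapsto {\cE}^\eps(\cdot,\cdot)$ and the Jacobian-weighted $L^2$-form are $C^1$ with derivatives $\cE_\cX$ and $\int_\Omega \cdot\,\cdot\,\div\cX$, differentiates, cancels the $\dot u$-terms via the unperturbed eigenvalue equation, and then substitutes the generalized Pohozaev identity \eqref{eq:V-prime-formula} with $F(u)=\frac{\lambda(0)}{2}u^2$. The only cosmetic difference is that the paper differentiates the identity ${\cE}^\eps(u_\eps,u_\eps)=\lambda(\eps)\int_\Omega u_\eps^2\,\textrm{Jac}_{\Phi_\eps}\,dx$ rather than the quotient, which amounts to the same computation.
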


For the proof of this theorem, we introduce some notation. In weak sense, the eigenvalue equation for $v_\eps$ reads
\begin{equation}
  \label{eq:perturbed-eigenvalue-weak}
{\cE}(v_\eps,\phi) = \lambda_\eps \int_{\Omega_\eps} v_\eps \phi\,dx \qquad \text{for all $\phi \in \cH^s_0(\Omega_\eps)$,}
\end{equation}
Using the fact that the map
  $$
\cH^s_0(\Omega) \to \cH^s_0(\Omega_\eps),\qquad \psi \mapsto \psi \circ \Phi_\eps^{-1}
$$
is a topological isomorphism, we may rewrite this property, by means of integral transformations, in the form
\begin{equation}
  \label{eq:perturbed-eigenvalue-weak-fixed-domain}
{\cE}^\eps(u_\eps,\phi) = {\l(\e)} \int_{\Omega} u_\eps \phi \textrm{Jac}_{\Phi_\e} \,dx \qquad \text{for all $\phi \in \cH^s_0(\Omega)$.}
\end{equation}
Here $\textrm{Jac}_{\Phi_\e}$ denotes the Jacobian determinant of the map
$\Phi_\eps \in C^{1,1}(\R^N,\R^N)$, and 
\be\label{eq:def-j-Uk}
{\cE}^\eps(v,\phi):= \frac{1}{2}\int_{\R^{2N}}  {(v(x)-v(y))(\phi(x)-\phi(y))}  K_\e(x,y)    dxdy  \qquad \text{for $v,\phi \in \cH^s_0(\Omega)$} 
\ee
with the kernel 
\be \label{eq:def-K-eps}
K_\e(x,y):=  c_{N,s}\frac{\textrm{Jac}_{\Phi_\e}(x)\textrm{Jac}_{\Phi_\e}(y)}{|\Phi_\e(x)-\Phi_\e(y)|^{N+2s}}\qquad\textrm{for $\eps \in (-\eps_0,\eps_0)$}.
\ee
The first step in the proof of Theorem~\ref{smooth curves-thm} is the following lemma.

\begin{lemma}
  \label{diff-Riesz-1}
  \begin{itemize}
  \item[(i)] The map 
$$
(-\eps_0,\eps_0) \times \cH^s_0(\Omega) \times \cH^s_0(\Omega) \to \R, \qquad
(\eps,v,w) \mapsto {\cE}^\eps(v,w)
$$
is of class $C^1$ with
\begin{align}
  \widetilde {\cE}(v,w):= \partial_\eps \Big|_{\eps = 0}{\cE}^\eps(v,w) = 
 \cE_{\cX}(u,v) \label{cV-v-diff-property-zero}
\end{align}
for $v,w \in \cH^s_0(\Omega)$, where $\cX$ is given in (\ref{eq:def-X-rate-of-change}) and the kernel $\cE_{\cX}(v,w)$ is defined in \eqref{eq-K-X-bilinear-form}.  
\item[(ii)] The map
  $$
(-\eps_0,\eps_0) \times L^2(\Omega) \times L^2(\Omega) \to \R, \qquad
(\eps,v,w) \mapsto \int_{\Omega} v w \textrm{Jac}_{\Phi_\e} \,dx
$$
is of class $C^1$ with
$$
\de_\e\Big|_{\e=0}  \int_{\Omega} v w \textrm{Jac}_{\Phi_\e} \,dx =
\int_{\Omega} v w \textrm{div}\,\cX\,dx \qquad \text{for $v,w \in L^2(\Omega)$.}
$$
\end{itemize}
\end{lemma}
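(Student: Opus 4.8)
The plan is to view both assertions as instances of one principle: each bilinear form is the value of a \emph{fixed} bounded multilinear map evaluated at the pair $(v,w)$ together with an $\eps$-dependent ``coefficient'' living in a suitable $L^\infty$-space; joint $C^1$-regularity in $(\eps,v,w)$ then follows from the chain rule for Fréchet derivatives (a bounded multilinear map is $C^\infty$) as soon as the coefficient depends on $\eps$ in a $C^1$ manner, and the derivative at $\eps=0$ is identified by a pointwise computation. Throughout I write $\Phi_\eps=\id+\Psi_\eps$ with $\Psi_\eps\in C^{1,1}(\R^N,\R^N)$, $\Psi_0=0$; by \eqref{eq:def-diffeom} the curve $\eps\mapsto\Psi_\eps$ is of class $C^2$ into $C^{1,1}$, hence $\eps\mapsto d\Psi_\eps$ is $C^1$ (even $C^2$) into $C^{0,1}(\R^N,\R^{N\times N})\hookrightarrow L^\infty$. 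For (ii) this already suffices: $\mathrm{Jac}_{\Phi_\eps}=\det(\id+d\Psi_\eps)$ is a degree-$N$ polynomial in the entries of $d\Psi_\eps$ and pointwise multiplication is bounded $L^\infty\times L^\infty\to L^\infty$, so $\eps\mapsto\mathrm{Jac}_{\Phi_\eps}\in L^\infty(\R^N)$ is $C^1$ with $\partial_\eps\big|_{\eps=0}\mathrm{Jac}_{\Phi_\eps}=\div\,\cX$ by Jacobi's formula at $\id$; composing with the bounded trilinear form $(g,v,w)\mapsto\int_\Omega g\,v\,w\,dx$ on $L^\infty(\Omega)\times L^2(\Omega)\times L^2(\Omega)$ gives (ii).

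For (i) I factor the kernel \eqref{eq:def-K-eps} as $K_\eps(x,y)=k_\eps(x,y)\,|x-y|^{-N-2s}$ with symbol
\[
k_\eps(x,y):=c_{N,s}\,\mathrm{Jac}_{\Phi_\eps}(x)\,\mathrm{Jac}_{\Phi_\eps}(y)\left(\frac{|x-y|^2}{|\Phi_\eps(x)-\Phi_\eps(y)|^2}\right)^{\!\frac{N+2s}{2}},
\]
and set $T(k,v,w):=\tfrac12\int_{\R^{2N}}(v(x)-v(y))(w(x)-w(y))\,k(x,y)\,|x-y|^{-N-2s}\,dx\,dy$. By Cauchy--Schwarz and the definition of the Gagliardo seminorm, $T$ is a bounded trilinear map on $L^\infty(\R^{2N})\times H^s(\R^N)\times H^s(\R^N)$, and ${\cE}^\eps(v,w)=T(k_\eps,v,w)$. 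Everything thus reduces to showing that $\eps\mapsto k_\eps$ is $C^1$ into $L^\infty(\R^{2N})$.

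The two Jacobian factors are handled as in (ii), using that $(a,b)\mapsto[(x,y)\mapsto a(x)b(y)]$ is bounded bilinear $L^\infty(\R^N)\times L^\infty(\R^N)\to L^\infty(\R^{2N})$. For the remaining factor, write $\Phi_\eps(x)-\Phi_\eps(y)=(x-y)+(\Psi_\eps(x)-\Psi_\eps(y))$, so that
\[
\frac{|\Phi_\eps(x)-\Phi_\eps(y)|^2}{|x-y|^2}=1+2\,\frac{x-y}{|x-y|}\cdot V_\eps(x,y)+|V_\eps(x,y)|^2,\qquad V_\eps(x,y):=\frac{\Psi_\eps(x)-\Psi_\eps(y)}{|x-y|}.
\]
The crucial observation is the identity $V_\eps(x,y)=\left(\int_0^1 d\Psi_\eps(y+t(x-y))\,dt\right)\frac{x-y}{|x-y|}$, which realizes $\eps\mapsto V_\eps\in L^\infty(\R^{2N},\R^N)$ as the image of the $C^1$-curve $\eps\mapsto d\Psi_\eps\in C^{0,1}(\R^N,\R^{N\times N})$ under a bounded linear operator; hence $\eps\mapsto V_\eps$ is $C^1$ with $V_0=0$. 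Since $V\mapsto1+2\,\frac{x-y}{|x-y|}\cdot V+|V|^2$ is a bounded polynomial map into $L^\infty(\R^{2N})$, the ratio $h_\eps:=|\Phi_\eps(x)-\Phi_\eps(y)|^2/|x-y|^2$ depends on $\eps$ in a $C^1$ way into $L^\infty(\R^{2N})$. Shrinking $\eps_0$ so that the $\Phi_\eps$ are uniformly bi-Lipschitz, $h_\eps$ takes values in a compact interval $[c,C]\subset(0,\infty)$ on which $t\mapsto t^{-(N+2s)/2}$ is smooth; a standard substitution-operator estimate on $L^\infty$ then shows $\eps\mapsto h_\eps^{-(N+2s)/2}\in L^\infty(\R^{2N})$ is $C^1$, and multiplying the three $C^1$-factors gives that $\eps\mapsto k_\eps$ is $C^1$.

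Finally, $\partial_\eps\big|_{\eps=0}k_\eps$ — a Fréchet derivative in $L^\infty$, hence equal a.e.\ to the pointwise $\eps$-derivative — is computed by the product rule (using $\Phi_0=\id$, $\mathrm{Jac}_{\Phi_0}\equiv1$, $\partial_\eps\big|_{\eps=0}\mathrm{Jac}_{\Phi_\eps}=\div\,\cX$ and $\partial_\eps\big|_{\eps=0}|\Phi_\eps(x)-\Phi_\eps(y)|^2=2(x-y)\cdot(\cX(x)-\cX(y))$) to be $c_{N,s}\left[\div\,\cX(x)+\div\,\cX(y)-(N+2s)\frac{(\cX(x)-\cX(y))\cdot(x-y)}{|x-y|^2}\right]=2\,|x-y|^{N+2s}K_{\cX}(x,y)$. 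By the chain rule, $\widetilde{\cE}(v,w)=\partial_\eps\big|_{\eps=0}{\cE}^\eps(v,w)=T\big(\partial_\eps\big|_{\eps=0}k_\eps,\,v,\,w\big)=\cE_{\cX}(v,w)$, which is \eqref{cV-v-diff-property-zero}; all of this restricts from $H^s(\R^N)$ to the closed subspace $\cH^s_0(\Omega)$ without change. I expect the only genuine obstacle to be the $C^1$-control of the symbol $k_\eps$ \emph{uniformly up to the diagonal} $x=y$; the fundamental-theorem-of-calculus representation of $V_\eps$ is precisely the device that dissolves the apparent singularity there and reduces the whole matter to operations — products, polynomial maps, substitution by a smooth scalar function — that are manifestly $C^1$ (even $C^\infty$) on $L^\infty$.
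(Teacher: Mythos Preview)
Your proof is correct and follows essentially the same route as the paper: factor out the singularity as $K_\eps(x,y)=k_\eps(x,y)\,|x-y|^{-N-2s}$ with symbol $k_\eps\in L^\infty(\R^{2N})$, show that $\eps\mapsto k_\eps$ is $C^1$ in $L^\infty$, and identify the pointwise derivative. The paper is terser---it simply asserts the uniform (in $x,y$) differentiability of the symbol and then estimates $\cE^\eps-\cE-\eps\widetilde{\cE}$ directly in the operator norm on bilinear forms---whereas your fundamental-theorem-of-calculus representation of the difference quotient $V_\eps$ and the substitution-operator argument supply exactly the justification for this uniformity that the paper omits.
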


\begin{proof}
We only give the proof of (i), the proof of (ii) is similar but easier. We first note that, by direct computation, we have 
\begin{align}
  &\de_\e\Big|_{\e=0}\Bigl(|x-y|^{N+2s}  K_\e(x,y)\Bigr) \label{eq:estim-K-eps-2}\\
  &=  c_{N,s} \left\lbrace (\textrm{div}\cX(x)+ \textrm{div}\cX(y)) - (N+2s)\frac{(\cX(x)-\cX(y))(x-y)}{|x-y|^2}-\right\rbrace = 2 |x-y|^{N+2s} K_\cX(x,y) \nonumber
\end{align}
uniformly in $x,y \in \R^N$. Next we consider the space $\cL^2_S(\cH^s_0(\Omega))$ of continuous symmetric bilinear forms on $\cH^s_0(\Omega)$, which is endowed with the norm
  $$
  \|b\|_{\cL^2_s}:= \sup  \Bigl\{\frac{|b(v,w)|}{\|v\|_{H^s}\|w\|_{H^s}} \::\: v,w \in \cH^s_0(\Omega) \setminus \{0\} \Bigr\}
  $$
  It then suffices to show that
  \begin{equation}
    \label{eq:sufficient-bilinear-diff}
  \text{the map $(-\eps_0,\eps_0) \to \cL^2_S(\cH^s_0(\Omega))$, $\eps \mapsto {\cE}^\eps$ is of class $C^1$ with $\partial_\eps \big|_{\eps=0}\, {\cE}^\eps =  \widetilde {\cE},$}   
\end{equation}
where $\widetilde {\cE}$ is defined in \eqref{cV-v-diff-property-zero}.
To see the differentiability at $\eps=0$, we note that, since 
$$
K_\e(x,y)-c_{N,s}|x-y|^{-N-2s} - 2K_\cX(x,y) = o(\eps)|x-y|^{-N-2s}
$$
by (\ref{eq:estim-K-eps-2}), we have, 
  for $v,w \in \cH^1_0(\Omega)$,
  \begin{align*}
    {\cE}^\eps&(v,w) -{\cE}(v,w)- \widetilde{{\cE}}(v,w)\\
    &= \frac{1}{2}\int_{\R^{2N}}  {(v(x)-v(y))(w(x)-w(y))}  \bigl(K_\e(x,y)-c_{N,s}|x-y|^{-N-2s} - 2K_\cX \bigr)dxdy\\
 &=o(\eps)\int_{\R^{2N}}  {(v(x)-v(y))(w(x)-w(y))}K_0(x,y)dxdy \le o(\eps) \|v\|_{H^1_0}\|w\|_{H^1_0}
  \end{align*}
  as $\eps \to 0$, where $o(\eps)$ is independent of $v$ and $w$. Consequently, 
  $$
  \|{\cE}^\eps- {\cE} - \eps \widetilde {\cE}\|_{\cL^2_S} =
  \sup \Bigl\{\frac{|{\cE}^\eps(v,w)-{\cE}(v,w)- \widetilde{{\cE}}(v,w)|}{\|v\|_{H^1_0}\|w\|_{H^1_0}} \::\: v,w \in \cH^1_0(\Omega) \setminus \{0\} \Bigr\}= o(\eps),
  $$
  and this shows that the map $\eps \mapsto {\cE}^\eps$ is differentiable at $\eps = 0$ with $\partial_\eps \big|_{\eps=0}\,{\cE}^\eps =  \widetilde {\cE}$.  For $\eps_* \in (-\eps_0,\eps_0)$ different from $0$, the same argument shows that $\eps \mapsto {\cE}^\eps$ is differentiabe at $\eps_*$, where $\partial_{\eps} \big|_{\eps = \eps_*}\, {\cE}^\eps$ has the same form as $\partial_{\eps} \big|_{\eps = 0}\, {\cE}^\eps$ in (\ref{cV-v-diff-property-zero}) with $\cX$ replaced by $\cX_{\eps_*}:= \partial_{\eps} \big|_{\eps = \eps_*}\,\Phi_\eps$. Finally, the continuity of the map
  $$
  (-\eps_0,\eps_0) \to \cL^2_S(\cH^s_0(\Omega)), \qquad \eps_* \mapsto \partial_{\eps} \big|_{\eps = \eps_*}\, {\cE}^\eps,
  $$
follows in a straightforward way from the fact that the map  
  $$
  (-\eps_0,\eps_0) \to L^\infty(\R^N \times \R^N),\qquad
  \eps \mapsto |x-y|^{N+2s}K_{\cX_\eps}(x,y)
  $$
is continuous. The proof of (\ref{eq:sufficient-bilinear-diff}) is thus finished.   
\end{proof}

We may then complete the 

\begin{proof}[Proof of Theorem~\ref{smooth curves-thm}]
Since ${\l(\e)}= \frac{{\cE}^\eps(u_\eps,u_\eps)}{\int_{\Omega} u_\eps^2 \textrm{Jac}_{\Phi_\e}\,dx}$ for $\eps \in (-\eps_0,\eps_0)$, it follows from Lemma~\ref{diff-Riesz-1} that the map $\eps \mapsto {\l(\e)}$ is of class $C^1$. So we may differentiate the equation
  $$
  {\cE}^\eps(u_\eps,u_\eps) = {\l(\e)} \int_{\Omega} u_\eps^2 \textrm{Jac}_{\Phi_\e}\,dx,
  $$
at $\eps = 0$, noting that
  $$
  \partial_\eps \Big|_{\eps=0}    {\cE}^\eps(u_\eps,u_\eps) = \widetilde {\cE}(u,u) + 2 {\cE}(\partial_\eps \big|_{\eps=0}\, u_\eps,u)
  $$
  and
  $$
  \partial_\eps \Big|_{\eps=0} \Bigl({\l(\e)} \int_{\Omega} u_\eps^2 \textrm{Jac}_{\Phi_\e}\,dx \Bigr) = \Bigl(  \partial_\eps \big|_{\eps=0}\, {\l(\e)}\Bigr) \int_{\Omega} u^2 \,dx + 2 \lambda(0) \int_{\Omega} \Bigl(\partial_\eps \big|_{\eps=0}\, v_\eps\Bigr) \phi \,dx + \lambda(0) \int_{\Omega} u^2 \div\,\cX\,dx.
  $$
 Since
  $$
  {\cE}(\partial_\eps \big|_{\eps=0} u_\eps,u)= \lambda(0) \int_{\Omega} \Bigl(\partial_\eps \big|_{\eps=0} u_\eps\Bigr)u \,dx
  $$
as a consequence of (\ref{eq:perturbed-eigenvalue-weak-fixed-domain}), we deduce that
  $$
  \widetilde {\cE}(u,u) = \Bigl(  \partial_\eps \big|_{\eps=0}\, {\l(\e)}\Bigr) \int_{\Omega} u^2 \,dx + \lambda(0) \int_{\Omega} u^2 \div\,\cX\,dx.
  $$
On the other hand, the generalized Pohozaev identity (\ref{eq:V-prime-formula}) for $u$ gives, with $F(u)=\frac{\lambda_0}{2}u^2$,
  $$
  \int_{\R^{2N}} K_{\cX}(x,y)(u(x)-u(y))^2\,dxdy =
  \lambda_0\int_{\Omega}u^2 \div \, \cX\,dx -   \Gamma(1+s)^2 \int_{\partial \Omega} \psi_u^2\,  \cX \cdot \nu \,dx 
$$
Recalling (\ref{cV-v-diff-property-zero}), we conclude that
$$
\partial_\eps \big|_{\eps=0}\, {\l(\e)}  = - \frac{ \Gamma(1+s)^2 \int_{\partial \Omega} \psi_u^2\,  \cX \cdot \nu \,dx}{\int_{\Omega} u^2 \,dx},
$$
as claimed.
\end{proof}

\section{Application to the radial eigenvalue problem}

In this section we study the eigenvalue problem
\be
\label{eq-eigenvalue-problem}
(-\Delta)^s w = \mu w \quad \text{in $\O$,}\qquad u \equiv 0 \quad \text{in $\O^c$},
\ee
among radial functions. For this, we let $H_{rad}^s$ denote the subspace of radially symmetric functions in the space $\calH^s_0(\O)$. 
By definition, a function $w \in H_{rad}^s$ is an eigenfunction of \eqref{eq-eigenvalue-problem} corresponding to the eigenvalue $\mu$ if 
\be\label{eq-eigenvalue-problem-weak-sense}
\calE_s(w,\psi)=\mu(\O)\int_{\O}w(x)\psi(x)dx \qquad \text{for all $\psi \in H_{rad}^s$.}
\ee

In the following, we will call $\mu$ a {\em radial eigenvalue} for $\mu$ if there exists an eigenfunction $w \in H_{rad}^s$ for $\mu$. 
It is a well-known fact that the radial eigenvalues of \eqref{eq-eigenvalue-problem} form an increasing sequence  of numbers $0<\mu_1<\mu_2\leq\mu_3\leq\cdots\nearrow+\infty$, counted with possible multiplicity.

While the simplicity of $\mu_1$ is a classical fact (see e.g \cite{FP}), the same property seems unavailable in the literature for higher eigenvalues. In this section, we shall show, by means of the fractional
Pohazaev identity (\ref{pohozaev-rosoton-serra}), that all radial eigenvalues are simple in the case where $\Omega$ is a ball or an annulus in $\R^N$.

For a related question, we refer to \cite{RF} where for $\O=\R^N$ simplicity result has been obtained for Schr\"odinger operator with a increasing radially symmetric potential.
The second aim of this section is to derive, from Theorem~\ref{smooth curves-thm}, a Hadamard formula for the dependence of the $k$-th eigenvalue $\mu_k$ on the inner and outer radius of $\Omega$.

The following is the main result of this section. Here and in the following, we identify a radial function $u=u(x)$ with the associated function $u=u(r)$ of the radial variable.

\begin{theorem}\label{shape-derivative-lambda-k}
Let $0< r_{inn}< r_{out}< \infty$ and suppose that either 
\begin{equation*}
\Omega = B_{r_{out}}(0) \qquad \text{or}\qquad \Omega = A(r_{inn},r_{out}):=\{x \in \R^N\::\: r_{inn} < |x| < r_{out}\}.
\end{equation*}
  Let $k\geq 1$ and let $\l_k$ be the $k$-th radial eigenvalue of \eqref{eq-eigenvalue-problem}. Then we have:
  \begin{itemize}
\item[(i)] $\l_k(\O)$ is simple.   
\item[(ii)] $\lambda_k$ depends in a differentiable way on $r_{out}$ with 
\be
  \label{der-th-k-out}
\frac{\partial_{\lambda_k}}{\partial r_{out}} = -\G(1+s)^2 |S^{N-1}| r_{out}^{N-1}
\psi_u^2(r_{out}).
\ee
Moreover, in the case where $\Omega = A(r_{inn},r_{out})$, $\l_k$ depends in a differentiable way on $r_{inn}$ with 
\be
  \label{der-th-k-inn}
\frac{\partial_{\lambda_k}}{\partial r_{inn}} = \G(1+s)^2 |S^{N-1}| r_{inn}^{N-1}
\psi_u^2(r_{inn})
\ee
Here $u \in H^s_{rad}$ is the (up to sign unique) $L^2$-normalized eigenfunction associated with $\lambda_k$, and
$\psi_u$ is the continuous extension of $\frac{u}{\delta^s}$ to $\overline \Omega$, as before. 
\end{itemize}
\end{theorem}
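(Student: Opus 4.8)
The plan is to establish (i) and (ii) separately: part (i) will use only the quadratic Pohozaev identity \eqref{pohozaev-rosoton-serra}, while part (ii) will be an application of Theorem~\ref{smooth curves-thm}.

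For (i), I would apply \eqref{pohozaev-rosoton-serra} to an arbitrary radial eigenfunction $w$ of $\lambda_k$, i.e. with $f(t)=\lambda_k t$ and $F(t)=\frac{\lambda_k}{2}t^2$, so that its right-hand side collapses to $2s\lambda_k\int_\O w^2\,dx$. Since $w$ and $\d$ are radial, $\psi_w=w/\d^s$ extends to a radial function on $\ov\O$ and is therefore constant on each boundary sphere; moreover $x\cdot\nu=r_{out}$ on the outer sphere and $x\cdot\nu=-r_{inn}$ on the inner sphere (with the convention $r_{inn}=0$ in the ball case). Evaluating the boundary integral with these facts turns \eqref{pohozaev-rosoton-serra} into
$$
\G(1+s)^2|S^{N-1}|\Big(r_{out}^{N}\psi_w^2(r_{out})-r_{inn}^{N}\psi_w^2(r_{inn})\Big)=2s\lambda_k\int_\O w^2\,dx,
$$
which is strictly positive for $w\not\equiv 0$, because $\lambda_k>0$. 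In particular $\psi_w(r_{out})\neq 0$ for every nonzero radial eigenfunction of $\lambda_k$. Given two radial eigenfunctions $u,v$ of $\lambda_k$, the function $w:=\psi_v(r_{out})\,u-\psi_u(r_{out})\,v$ satisfies the same eigenvalue equation and has $\psi_w(r_{out})=0$, hence $w\equiv 0$ by the previous sentence; since $\psi_v(r_{out})\neq 0$, this forces $u$ and $v$ to be proportional, so $\lambda_k$ is simple.

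For (ii), the idea is to realize the variation of a single radius by a radial deformation and then invoke Theorem~\ref{smooth curves-thm}. Keeping one radius fixed, for the other I would take $\Phi_\eps(x)=g_\eps(|x|)\,\tfrac{x}{|x|}$ with $g_\eps(r)=r+\eps\,h(r)$, where $h$ is a fixed smooth function vanishing near $0$, near the radius to be kept fixed, and near $+\infty$, with $h\equiv 1$ in a neighbourhood of the radius to be varied and $|h'|$ small enough that $g_\eps$ is an increasing bijection of $(0,\infty)$ for small $|\eps|$. Then $\Phi_\eps$ satisfies \eqref{eq:def-diffeom}, $\O_\eps=\Phi_\eps(\O)$ is again a ball (resp. an annulus) with the chosen radius shifted by $\eps$, and $\cX:=\de_\eps\big|_{\eps=0}\Phi_\eps=h(|x|)\tfrac{x}{|x|}$ vanishes on the sphere kept fixed and equals $\nu$ on the sphere being varied --- except on the inner sphere of an annulus, where $\nu=-x/|x|$ and hence $\cX\cdot\nu\equiv -1$. (For $\O=B_{r_{out}}$ one may instead simply use the dilation $\Phi_\eps(x)=\tfrac{r_{out}+\eps}{r_{out}}x$, for which $\cX\cdot\nu\equiv 1$ on $\partial\O$.) By part (i) the $k$-th radial eigenvalue of every ball and every annulus is simple, and by Lemma~\ref{diff-Riesz-1} the pulled-back forms $\cE^\eps$ and mass forms in \eqref{eq:perturbed-eigenvalue-weak-fixed-domain} depend in a $C^1$ fashion on $\eps$; standard perturbation theory for a simple eigenvalue (or the implicit function theorem applied to the normalized eigenpair equation) then yields a $C^1$ curve $\eps\mapsto u_\eps\in\cH^s_0(\O)$ of radial eigenfunctions whose push-forwards $v_\eps=u_\eps\circ\Phi_\eps^{-1}$ solve \eqref{eq:perturbed-eigenvalue} with $\lambda(\eps)=\lambda_k(\O_\eps)$ (no eigenvalue crossing occurs, since by (i) every $\lambda_j(\O_\eps)$ stays simple). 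This is exactly the hypothesis of Theorem~\ref{smooth curves-thm}, which gives both the asserted differentiability of $\lambda_k$ in the radius and the identity
$$
\de_\eps\Big|_{\eps=0}\lambda(\eps)=-\frac{\G(1+s)^2\int_{\partial\O}\psi_u^2\,\cX\cdot\nu\,d\sigma}{\int_\O u^2\,dx}.
$$
Normalizing $\int_\O u^2\,dx=1$ and inserting the value of $\cX\cdot\nu$ together with the constant value of $\psi_u$ on the relevant sphere and the surface measure $|S^{N-1}|r^{N-1}$ then yields \eqref{der-th-k-out} and \eqref{der-th-k-inn}.

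The routine parts are the explicit construction of $\Phi_\eps$ and the bookkeeping with the surface measures. The two points that need genuine care are: in (i), the annulus case, where $\int_{\partial\O}\psi_w^2\,x\cdot\nu\,d\sigma$ is not sign-definite, so one must use that the outer contribution strictly dominates --- which is precisely what forces $\psi_w(r_{out})\neq 0$; and in (ii), ensuring that the deformation keeps $\O_\eps$ a ball (resp. annulus), so that simplicity and hence the $C^1$ eigenpair branch are available, and carefully invoking the perturbation theory that supplies the $C^1$ curve $\eps\mapsto u_\eps$ required by Theorem~\ref{smooth curves-thm}. As a consistency check one may verify that \eqref{der-th-k-out}, \eqref{der-th-k-inn}, the identity $2s\lambda_k=\G(1+s)^2|S^{N-1}|\big(r_{out}^N\psi_u^2(r_{out})-r_{inn}^N\psi_u^2(r_{inn})\big)$ from (i), and the scaling relation $\lambda_k(A(tr_{inn},tr_{out}))=t^{-2s}\lambda_k(A(r_{inn},r_{out}))$ are mutually consistent via Euler's relation.
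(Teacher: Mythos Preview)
Your proposal is correct and follows essentially the same route as the paper. Part~(i) is exactly the paper's argument (the paper phrases it as the $n=1$ case of an injectivity statement for the linear map $w\mapsto \psi_w(r_{out})$, which is equivalent to your linear-combination trick), and part~(ii) matches the paper's proof: construct a radial deformation $\Phi_\eps=x+\eps\cX(x)$ with $\cX=h(|x|)\tfrac{x}{|x|}$ supported near the boundary component to be moved, use simplicity from~(i) and the implicit function theorem to obtain the $C^1$ eigenpair curve, and then apply Theorem~\ref{smooth curves-thm}.
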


Notice that the statement of the theorem is new for $k>1$ but is already known for $k=1$. In fact, as we already mentioned above, the simplicity of the first (radial) eigenvalue is a classical fact, while the identities \eqref{der-th-k-out} and \eqref{der-th-k-inn} follow from \cite[Corollary 1.2]{SMT} in this special case.

In the case $N=1$ the annulus $A(r_{inn},r_{out})$ is a disconnected set. It is therefore natural to ask whether at least a weaker variant of Theorem~\ref{shape-derivative-lambda-k} still holds on other disconnected radial open sets. The following result gives a partial answer to this question.

\begin{theorem}
\label{higher-mult}  
Consider, for some $n \in \N$, real positive numbers
$$
0< r_{inn}^1< r_{out}^1< r_{inn}^2 < r_{out}^2 < \dots < r_{inn}^n < r_{out}^n<\infty
$$
and suppose that either 
\begin{equation*}
  \Omega = B_{r_{out}^1}(0)\cup \bigcup_{i=2}^n A(r_{inn}^i, r_{out}^i) \quad \text{or}\quad
  \Omega = \bigcup_{i=1}^n A(r_{inn}^i, r_{out}^i).
\end{equation*}
Then every radial eigenvalue of \ref{eq-eigenvalue-problem} on $\Omega$ has multiplicity at most $n$.
\end{theorem}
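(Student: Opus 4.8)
The plan is to exploit the one-dimensional nature of the radial problem combined with the fractional Pohozaev identity~(\ref{pohozaev-rosoton-serra}) applied on each connected component. Suppose $\mu$ is a radial eigenvalue of~(\ref{eq-eigenvalue-problem}) on $\Omega$ with eigenspace $V \subset H^s_{rad}$. Write the connected components of $\Omega$ as $\Omega_1,\dots,\Omega_n$, where $\Omega_1$ is either a ball or an annulus and $\Omega_2,\dots,\Omega_n$ are annuli. Each $\Omega_i$ has an \emph{inner sphere} $\{|x|=r^i_{inn}\}$ (for $\Omega_1=B_{r^1_{out}}$ there is none) and an \emph{outer sphere} $\{|x|=r^i_{out}\}$. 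The key point is that for a radial function $w \in H^s_{rad}$, the function $\psi_w = w/\delta^s$ restricted to $\partial\Omega$ is determined by finitely many real numbers, namely the $2n$ values $\psi_w(r^i_{inn})$, $\psi_w(r^i_{out})$ (or $2n-1$ values in the ball case). I would first argue that the linear map
$$
T\colon V \to \R^{2n}, \qquad T(w) = \bigl(\psi_w(r^1_{out}), \psi_w(r^2_{inn}), \psi_w(r^2_{out}),\dots,\psi_w(r^n_{inn}),\psi_w(r^n_{out})\bigr)
$$
(suitably truncated in the ball case) is \emph{injective}; this already gives $\dim V \le 2n$, and I would then improve this to $\dim V \le n$ via the Pohozaev constraint.

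The injectivity of $T$ is the heart of the argument, and I expect it to be the main obstacle. The idea: if $w \in V$ with $T(w)=0$, i.e. $\psi_w$ vanishes at every boundary sphere, then I would apply the generalized integration-by-parts identity of Theorem~\ref{main-thm-P4}, or the Hadamard-type boundary term, to conclude $w \equiv 0$. More concretely, in the local case $s=1$ this is just the statement that a radial eigenfunction with vanishing normal derivative on $\partial\Omega$ is trivial (ODE uniqueness). In the fractional case, one cannot localize to a single component because $(-\Delta)^s$ is nonlocal; instead I would use that $w$ solves $(-\Delta)^s w = \mu w$ globally on $\Omega = \bigcup \Omega_i$ with $w \equiv 0$ outside, and test a Pohozaev/unique-continuation-type argument. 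A cleaner route may be: order the components by radius and use a "peeling" argument—on the innermost component $\Omega_1$, apply~(\ref{pohozaev-rosoton-serra}) with $F = \tfrac{\mu}{2}w^2$ to get $\Gamma(1+s)^2\int_{\partial\Omega_1}\psi_w^2\, x\cdot\nu\,d\sigma = (2N-(N-2s))\mu\int_{\Omega_1}$\dots, but this requires that $w$ restricted to $\Omega_1$ be itself a solution on $\Omega_1$, which fails by nonlocality. Given this difficulty, I suspect the actual proof uses a different, more robust device: the map $w \mapsto \psi_w|_{\partial\Omega}$ factors through a finite-dimensional trace, and the bilinear form $\calE_s(\cdot,\cdot)$ on $V$ is a multiple of the $L^2$ form, so information is encoded entirely in the boundary trace. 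I would therefore try to show directly that two eigenfunctions $w_1,w_2 \in V$ that agree on all boundary spheres (i.e. $\psi_{w_1}=\psi_{w_2}$ on $\partial\Omega$) must be equal, using Theorem~\ref{main-thm-P4} with $u=w_1-w_2$, $v = w_1 - w_2$: since $\psi_{w_1-w_2}=0$ on $\partial\Omega$ the boundary term drops, and with $\cX = \id$ one gets an identity forcing a contradiction unless $w_1 = w_2$.

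To get the sharp bound $n$ rather than $2n$, I would then use the reflection/Kelvin-type symmetry or the structure of the Pohozaev identity applied componentwise on the two spheres of a single annulus: the outer-sphere term and inner-sphere term in~(\ref{der-th-k-out})--(\ref{der-th-k-inn}) are linked, so the $2n$ boundary values are not independent—there are $n$ linear relations among them coming from~(\ref{pohozaev-rosoton-serra}) applied with the identity vector field (one relation per component, since $x\cdot\nu = r_{out}$ on the outer sphere and $x\cdot\nu=-r_{inn}$ on the inner sphere of $\Omega_i$). This cuts the dimension in half, giving $\dim V \le n$. Finally I would record that for $n=1$ this recovers the simplicity statement of Theorem~\ref{shape-derivative-lambda-k}(i), and observe that the routine verifications—continuity of $\psi_w$, applicability of the regularity theory of~\cite{RX,FS-2019} on each $C^{1,1}$ component, and the reduction of $\psi_w|_{\partial\Omega}$ to finitely many values by radial symmetry—present no difficulty.
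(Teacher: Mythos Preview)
Your injectivity argument for the map $T:V\to\R^{2n}$ is fine: if $\psi_w$ vanishes on \emph{all} boundary spheres, the global Pohozaev identity~(\ref{pohozaev-rosoton-serra}) applied on the whole domain $\Omega$ gives $\int_\Omega w^2\,dx=0$, hence $w=0$. But your reduction from $2n$ to $n$ does not work. You propose to obtain ``$n$ linear relations, one per component'' from~(\ref{pohozaev-rosoton-serra}). As you yourself note earlier, the nonlocality of $(-\Delta)^s$ prevents applying the Pohozaev identity on a single component $\Omega_i$: the restriction $w|_{\Omega_i}$ is not a solution of~(\ref{eq-eigenvalue-problem}) on $\Omega_i$. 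Applied globally,~(\ref{pohozaev-rosoton-serra}) gives a \emph{single} identity, and it is quadratic in the boundary values, not linear; one quadratic relation satisfied by the points of a linear subspace does not cut its dimension by $n$.

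The paper's proof bypasses this entirely by exploiting the \emph{sign structure} of the global identity. Writing~(\ref{pohozaev-rosoton-serra}) with $f(u)=\mu u$ on the full domain yields
\[
\frac{2s\mu}{\Gamma(1+s)^2}\int_\Omega u^2\,dx \;=\; |S^{N-1}|\Bigl(\sum_i (r^i_{out})^N\psi_u^2(r^i_{out}) - \sum_i (r^i_{inn})^N\psi_u^2(r^i_{inn})\Bigr),
\]
because $x\cdot\nu=r^i_{out}>0$ on outer spheres and $x\cdot\nu=-r^i_{inn}<0$ on inner spheres. The inner-sphere contributions enter with a minus sign, so if merely the $n$ outer values $\psi_u(r^i_{out})$ vanish, the right-hand side is $\le 0$ and hence $u=0$. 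Thus the linear map $\ell:V_\mu\to\R^n$, $\ell(u)=(\psi_u(r^1_{out}),\dots,\psi_u(r^n_{out}))$, is already injective, giving $\dim V_\mu\le n$ directly. This sign observation is the missing idea in your plan.
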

 
Note that Theorem~\ref{shape-derivative-lambda-k}(i) is a special case of Theorem~\ref{higher-mult}. In the following, we therefore give the proof of Theorem~\ref{higher-mult} first and then add the proof of Theorem~\ref{shape-derivative-lambda-k}(ii).

We start by noting the following direct consequence of the fractional Pohozaev type identity (\ref{pohozaev-rosoton-serra}).

\begin{proposition}
  \label{pohozaev-radial}
Consider, for some $n \in \N$, real positive numbers
$$
0< r_{inn}^1< r_{out}^1< r_{inn}^2 < r_{out}^2 < \dots < r_{inn}^n < r_{out}^n<\infty
$$
and suppose that either 
$$  \Omega = B_{r_{out}^1}(0)\cup \bigcup_{i=2}^n A(r_{inn}^i, r_{out}^i) \quad \text{or}\quad
  \Omega = \bigcup_{i=1}^n A(r_{inn}^i, r_{out}^i).
$$
Moreover, let $u \in H_{rad}^s$ be a solution of \eqref{eq-eigenvalue-problem}. 
 \begin{itemize}
\item[(i)] We have
\begin{equation}
\label{equiv-prop}  
  u = 0 \qquad \text{if and only if} \qquad \psi_u(r_{out}^1)=\psi_u(r_{out}^2)= \dots = \psi_u(r_{out}^n)=0. 
\end{equation}
\item[(ii)]  If $\Omega = \bigcup \limits_{i=1}^n A(r_{inn}^i, r_{out}^i)$, then 
  \begin{equation}
    \label{eq:annulus-formula}
    \int_{\Omega}u^2\,dx=\frac{|S^{N-1}| \Gamma(1+s)^2}{2s \mu}\sum_{i=i}^n\Bigl(\bigl(r_{out}^i\bigr)^N \psi_u^2(r_{out}^i) - \bigl(r_{inn}^i\bigr)^N \psi_u^2(r_{inn}^i)\Bigr).
  \end{equation}
\item[(iii)] If $\Omega = B_{r_{out}^1}(0)\cup \bigcup \limits_{i=2}^n A(r_{inn}^i, r_{out}^i)$, then
  \begin{align}
    \label{eq:ball-formula}
    \int_{\Omega}u^2\,dx &= \\
    &\frac{|S^{N-1}| \Gamma(1+s)^2}{2s \mu}\Bigl(\bigl(r_{out}^1\bigr)^N \psi_u^2(r_{out}^1) + \sum_{i=2}^n\bigl(r_{out}^i\bigr)^N \psi_u^2(r_{out}^i) - \bigl(r_{inn}^i\bigr)^N \psi_u^2(r_{inn}^i)\Bigr).\nonumber
  \end{align}
\end{itemize}
\end{proposition}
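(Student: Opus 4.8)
\textbf{Proof strategy for Proposition~\ref{pohozaev-radial}.} The plan is to apply the fractional Pohozaev identity \eqref{pohozaev-rosoton-serra} to the eigenfunction $u$ on each of the radial components of $\Omega$ and to exploit the explicit geometry of balls and annuli. For the linear nonlinearity $f(u)=\mu u$, we have $F(u)=\frac{\mu}{2}u^2$, so the right-hand side of \eqref{pohozaev-rosoton-serra} becomes $\mu N\int_\Omega u^2\,dx-(N-2s)\mu\int_\Omega u^2\,dx=2s\mu\int_\Omega u^2\,dx$. Hence \eqref{pohozaev-rosoton-serra} reduces in this case to
$$
\Gamma(1+s)^2\int_{\partial\Omega}\psi_u^2\,x\cdot\nu\,d\sigma=2s\mu\int_\Omega u^2\,dx.
$$
The boundary $\partial\Omega$ decomposes into spheres of radius $r_{out}^i$ (with outer normal $\nu=x/|x|$, so $x\cdot\nu=r_{out}^i$) and radius $r_{inn}^i$ (with outer normal $\nu=-x/|x|$, so $x\cdot\nu=-r_{inn}^i$); the case $\Omega=B_{r_{out}^1}(0)\cup\cdots$ differs only in that the first component contributes no inner sphere. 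Since $u$ is radial, $\psi_u$ is constant on each of these spheres, and the surface measure of a sphere of radius $r$ in $\R^N$ is $|S^{N-1}|r^{N-1}$. Substituting, the boundary integral over one outer sphere is $|S^{N-1}|(r_{out}^i)^{N-1}\psi_u^2(r_{out}^i)\cdot r_{out}^i=|S^{N-1}|(r_{out}^i)^N\psi_u^2(r_{out}^i)$, and similarly each inner sphere contributes $-|S^{N-1}|(r_{inn}^i)^N\psi_u^2(r_{inn}^i)$. This yields exactly \eqref{eq:annulus-formula} and \eqref{eq:ball-formula} after dividing by $2s\mu$ (note $\mu>0$ since $\mu\ge\mu_1>0$).

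For part (i), the implication ``$u=0\Rightarrow\psi_u=0$ on all outer spheres'' is trivial. For the converse, I would argue as follows. If all $\psi_u(r_{out}^i)=0$, then the total boundary integral on the left of the displayed identity above is $-|S^{N-1}|\sum_i(r_{inn}^i)^N\psi_u^2(r_{inn}^i)\le 0$ (with the convention that the innermost term is absent in the ball case), while the right-hand side $2s\mu\int_\Omega u^2\,dx\ge 0$. Hence both sides vanish, which forces $\int_\Omega u^2\,dx=0$, i.e. $u\equiv 0$. This is the cleanest route and avoids having to separately track the inner-sphere values.

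\textbf{Main obstacle.} The only delicate point is the validity of \eqref{pohozaev-rosoton-serra} on each component separately: strictly speaking \eqref{pohozaev-rosoton-serra} as stated applies to $\Omega$ of class $C^{1,1}$, and one must make sure that restricting $u$ to a single annular component $A(r_{inn}^i,r_{out}^i)$ again produces an admissible weak solution there (with the same $\psi_u$, since the distance function $\delta$ of the component agrees locally with that of $\Omega$ near the component's boundary). Since the components are pairwise disjoint and at positive distance from one another, $u|_{A(r_{inn}^i,r_{out}^i)}$ lies in $\cH^s_0(A(r_{inn}^i,r_{out}^i))$ and solves \eqref{eq-eigenvalue-problem} there in the weak sense; alternatively, one applies \eqref{pohozaev-rosoton-serra} directly on $\Omega$ as a whole, which is already $C^{1,1}$, and simply reads off the decomposition of the boundary integral. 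Either way the argument is routine, and the bulk of the proof is the bookkeeping of signs of $x\cdot\nu$ on inner versus outer spheres together with the elementary surface-area computation $|\{|x|=r\}|=|S^{N-1}|r^{N-1}$.
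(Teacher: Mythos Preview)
Your argument is correct and matches the paper's proof essentially line by line: apply \eqref{pohozaev-rosoton-serra} on the whole of $\Omega$ with $f(u)=\mu u$, compute $x\cdot\nu$ on each boundary sphere, use the radiality of $\psi_u$ and the surface-area formula to obtain \eqref{eq:annulus-formula} and \eqref{eq:ball-formula}, and deduce (i) from the sign of the remaining inner-sphere terms.

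One caveat about your ``Main obstacle'' paragraph: the component-wise alternative you sketch does \emph{not} work. Because $(-\Delta)^s$ is nonlocal, the restriction of $u$ to a single annular component (extended by zero elsewhere) is \emph{not} a weak solution of \eqref{eq-eigenvalue-problem} on that component; the value of $(-\Delta)^s u$ at a point in $A(r_{inn}^i,r_{out}^i)$ genuinely depends on the values of $u$ on the other components. So the only valid route is the one you call the ``alternative'': apply \eqref{pohozaev-rosoton-serra} once on the full open set $\Omega$, which is $C^{1,1}$, and then decompose the boundary integral. This is exactly what the paper does, and your main argument already follows this path, so the proof stands.
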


Here, as noted before, we write $u$ and $\psi_u$ as a function of the radial variable.

\begin{proof}
Applying (\ref{pohozaev-rosoton-serra}) with $f(u)= \mu u$ and $F(u) = \frac{\mu}{2}u^2$, we obtain 
\begin{equation}
  \label{eq:pohozaev-specialized}
\int_{\Omega}u^2\,dx = \frac{\Gamma(1+s)^2}{2s \mu}\int_{\partial \Omega} \Bigl(\frac{u}{\delta^s}\Bigr)^2 x \cdot \nu\, d\sigma(x),
\end{equation}
where, as before, $\nu$ denotes the outer unit normal on $\partial \Omega$.
The formulas (\ref{eq:ball-formula}) and (\ref{eq:annulus-formula}) follow directly from (\ref{eq:pohozaev-specialized}) and the radiality of $u$ in view of the fact that the outward unit normal
  $\nu$ on $\partial \Omega$ is given by
  $$
  \nu(x)= \left\{
  \begin{aligned}
    &\frac{x}{|x|} &&\qquad \text{if $|x|= r_{out}^i$ for some $i \in \{1,\dots,N\}$;}\\
    &-\frac{x}{|x|} &&\qquad \text{if $|x|= r_{inn}^i$ for some $i \in \{1,\dots,N\}$.}
  \end{aligned}
\right.
$$
To see (\ref{equiv-prop}), we note that $u = 0$ trivially implies $\psi_u(r_{out}^i)=0$ for $i=1,\dots,n$. On the other hand, if $\psi_u(r_{out}^i)=0$ for $i = 1,\dots,n$, it follows from (\ref{eq:ball-formula}) and (\ref{eq:annulus-formula}) that $\int_{\Omega}u^2\,dx \le 0$ and therefore $u =0$. Hence (\ref{equiv-prop}) follows.
\end{proof}

We may now complete the

\begin{proof}[Proof of Theorem~\ref{higher-mult}]
  Let, for $\mu>0$, $V_\mu \subset H^s_{rad}$ denote the space of radial solutions of the eigenvalue problem \eqref{eq-eigenvalue-problem}. From (\ref{equiv-prop}), it follows that the linear map
  $$
  \ell: V_\mu \to \R^n, \qquad \ell(u)= \Bigl(\psi_u(r_{out}^1),\dots,\psi_u(r_{out}^n)\Bigr)
  $$
  is injective. Hence the space $V_\mu$ is at most $n$-dimensional. It thus follows that every positive eigenvalue $\mu$ of \eqref{eq-eigenvalue-problem} has multiplicity at most $n$.
  \end{proof}

  In the remainder of this section, we give the

  \begin{proof}[Proof of Theorem~\ref{shape-derivative-lambda-k}(ii)]
 We only prove the differentiability of $\lambda_k$ as a function of $r_{out}$ and the formula \eqref{der-th-k-out}, the differentiability as a function of $r_{inn}$ and the formula \eqref{der-th-k-inn} follow in a similar way.

 We fix $\delta>0$ with $\delta < r_{out}$ in case
 $\Omega = B_{r_{out}}(0)$ and $\delta < r_{out}-r_{inn}$ in case $\Omega = \Omega = A(r_{inn},r_{out})$. Moreover, we let 
  $\cX \in C^{1,1}(\R^N,\R^N)$ be a vector field with
  $$
  \cX(x)= \frac{x}{|x|} \qquad \text{for $x \in A(r_{out}-\frac{\delta}{2},r_{out}+\frac{\delta}{2})$}
    $$
    and
    $$
    \cX \equiv 0 \qquad \text{in $\R^N \setminus A(r_{out}-\delta,r_{out}+\delta)$.}
$$
  For $\eps \in \R$, we now define $\Phi_\eps \in C^{1,1}(\R^N,\R^N)$, $\Phi_\eps(x)= x + \eps \cX(x)$. Then $\Phi_\eps$ satisfies the assumptions \eqref{eq:def-diffeom}, so we may fix $\eps_0 \in (0,\frac{\delta}{2})$ sufficiently small so that $\Phi_\e:\R^N\to \R^N$ is a global diffeomorphism for $\eps \in (-\eps_0,\eps_0)$.
  Moreover, for $\e \in (-\eps_0,\eps_0)$, we write
  $\Omega_\eps = \Phi_\eps(\Omega)$. By our choice of $\delta$ we have
  \begin{equation}
    \label{eq:omega-eps-case-1}
  \Omega_\eps = B_{r_{out}+\eps}(0) \qquad \text{if}\quad \Omega = B_{r_{out}}(0)
  \end{equation}
  and
  \begin{equation}
    \label{eq:omega-eps-case-2}
  \Omega_\eps = A(r_{inn},r_{out}+\eps) \qquad \text{if}\quad \Omega = A(r_{inn},r_{out}).
  \end{equation}
  Next, for $\eps \in (-\eps_0,\eps_0)$, we let ${\l(\e)}$ denote the $k$-th eigenvalue of (\ref{eq-eigenvalue-problem}) on $\Omega = \Omega_\eps$. By Proposition~\ref{pohozaev-radial}(i), there exists a unique eigenfunction $v_\eps \in \cH^s_0(\Omega_\eps)$ corresponding to ${\l(\e)}$ with  $\psi_{v_\eps}(r_{out}+\eps)>0$ and the normalization 
  $\|u_\eps\|_{L^2(\Omega)}=1$, where we define 
  $$
  u_\eps:= v_\eps \circ \Phi_\eps \qquad \text{for $\eps \in (-\eps_0,\eps_0)$.}
  $$
  We claim that 
  \begin{equation}
    \label{eq:sufficent-differentiability}
    \text{the curve $(-\eps,\eps) \to \cH^s_0(\Omega), \quad \eps  \mapsto u_\eps$ is of class $C^1$.}
  \end{equation}
  Once this is proved, it follows from Theorem~\ref{smooth curves-thm} and the definition of $\cX$ that $\eps \mapsto {\l(\e)}$ is a differentiable function with
  $$
\partial_\eps \big|_{\eps = 0}{\l(\e)} = - \Gamma(1+s)^2 \int_{\partial \Omega} \psi_u^2\,  \cX \cdot \nu \,dx =  -\G(1+s)^2 |S^{N-1}| r_{out}^{N-1}
\psi_u^2(r_{out}).
  $$
  Thus \eqref{der-th-k-out} follows by (\ref{eq:omega-eps-case-1}) and (\ref{eq:omega-eps-case-2}). As mentioned before, (\ref{der-th-k-inn}) follows by a similar argument.

  It thus remains to prove (\ref{eq:sufficent-differentiability}). More precisely, it suffices to prove, using the simplicity of the eigenvalue ${\l(\e)}$ and the implicit function theorem, the differentiability of the map $\eps \to u_\eps$ in a neighborhood of $\eps = 0$. 
For $\eps \in (-\eps_0,\eps_0)$, we define the linear maps
$$
L_\eps  \in \cL \bigl(H^s_{rad}, (H^s_{rad})'\bigr), \qquad [L_\eps v]w= {\cE}^\eps(v,w)
$$
and
$$
J_\eps \in \cL \bigl(H^s_{rad}, (H^s_{rad})'\bigr), \qquad [J_\eps v]w= \int_{\Omega}v w \textrm{Jac}_{\Phi_\e}\,dx.
$$
Here, as usual, $(H^s_{rad})'$ denotes the topological dual of $H^s_{rad}$. With this notation, we can write the property (\ref{eq:perturbed-eigenvalue-weak-fixed-domain}) in the form
\begin{equation}
  \label{eq:eigenvalue-eps-abstract-form}
L_\eps u = \lambda J_\eps u \qquad \text{in $(H^s_{rad})'$.}
\end{equation}

Moreover, as a consequence of Lemma~\ref{diff-Riesz-1}, we see that the maps 
    $$
  (-\eps_0,\eps_0) \to  \cL\bigl(H^s_{rad},\bigl(H^s_{rad}\bigr)'\bigr),\qquad  \eps \mapsto L_\eps, \qquad \eps \mapsto J_\eps
  $$
  are of class $C^1$. Consequently, the map 
\begin{equation}
  \label{eq:def-F}
  \Sigma:(-\eps_0,\eps_0) \times (0,\infty) \times H^s_{rad} \to  \R \times (H^1_{rad})', \qquad \Sigma(\eps,\lambda,u)=(\|u\|_{L^2(\Omega)}^2-1, L_\eps u- \lambda  J_\eps u)
\end{equation}
is also of class $C^1$, and by definition we have
\begin{equation}
  \label{eq:eps-solution}
\Sigma(\eps,{\l(\e)},u_\eps)= 0 \qquad \text{for $\eps \in (-\eps_0,\eps_0)$.}
\end{equation}
Moreover, we have
\begin{equation*}
\frac{\partial \Sigma}{\partial (\lambda,u)}(0,\lambda,u)(\mu,v) = \Bigl(2 \langle u,v \rangle_{L^2(\Omega)}, L_0 v -\lambda v - \mu J_0(u \Bigr)
\end{equation*}
for $(\lambda,u) \in (0,\infty) \times H^s_{rad}$ and $(\mu,v) \in \R \times H^s_{rad}$. We claim that 
\begin{equation}
  \label{eq:claim-isomorphism}
\frac{\partial \Sigma}{\partial (\lambda,u)}(0,\lambda(0),u_0) \in \cL\Bigl(\R \times H^1_{rad},\R \times \bigl(H^1_{rad}\bigr)'\Bigr)\qquad \text{is a topological isomorphism.}     
\end{equation}
Indeed, since the radial eigenvalue $\lambda(0)$ is simple by Theorem~\ref{shape-derivative-lambda-k}(i), the linear map
  $$
  v \mapsto L_0 v -\lambda(0) J_0 v
  $$
  defines a topological isomorphism between the spaces $\{v \in H^1_{rad}\::\: \langle v,u_0 \rangle_{L^2(\Omega)} =0\}$ and 
  $Y:= \{\phi \in (H^1_{rad})'\::\: \phi(u_0)=0\}$. From this we readily deduce (\ref{eq:claim-isomorphism}).

  From (\ref{eq:eps-solution}), (\ref{eq:claim-isomorphism}) and the simplicity of the eigenvalue ${\l(\e)}$, it follows by the implicit function theorem that the map $\eps \mapsto u_\eps$ is of class $C^1$ in a neighborhood of $\eps=0$, as claimed. 
\end{proof}

\textbf{Acknowledgements:} This work is supported by DAAD and BMBF (Germany) within the project 57385104.


\begin{thebibliography}{99}


\bibliography{ref}
\bibliographystyle{ieeetr}

\bibitem{dal} Anne-Laure Dalibard and David G{\'e}rard-Varet,  \emph{On shape optimization problems involving the fractional Laplacian}, ESAIM Control Optim. Calc. Var. 19 (2013), no.4 976--1013.
\bibitem{delfour-zolesio} M. Delfour and J. Zolesio, \emph{Shapes and Geometries. Analysis, Differential Calculus, and Optimization}, Advances in Design and Control, Vol. 4, Society for Industrial and Applied Mathematics (SIAM), Philadelphia, PA, 2001. 
\bibitem{SMT} S. M. Djitte, M. M. Fall, and T. Weth, \emph{A fractional Hadamard formula and applications}, Calculus of Variations and Partial Differential Equations 60.6 (2021): 1-31.

\bibitem{FFTW} M. M. Fall, P. A. Feulefack, R. Y. Temgoua, T. Weth, Morse index versus radial symmetry for fractional Dirichlet problems, Adv. Math. 384 (2021), 107728, 22 pp. 

\bibitem{FS-2019} M. M. Fall and S. Jarohs, \emph{Gradient estimates in fractional Dirichlet problems}, Potential Analysis 54.4 (2021): 627-636.
 \bibitem{FW18} M. M. Fall and T. Weth, \emph{Critical domains for the first nonzero Neumann eigenvalue in Riemannian manifolds}, The Journal of Geometric Analysis (2018): 1-27.
 \bibitem{RF} R. L. Frank, E. Lenzmann, and L. Silvestre, \emph{Uniqueness of radial solutions for the fractional Laplacian}, Communications on Pure and Applied Mathematics 69.9 (2016): 1671-1726.
 \bibitem{FP}F. Giovanni, and G. Palatucci. "Fractional p-eigenvalues." arXiv preprint arXiv:1307.1789 (2013).
\bibitem{G11} P. Grisvard, \emph{Elliptic problems in nonsmooth domains}, volume 69 of Classics in Applied Mathematics.
Society for Industrial and Applied Mathematics (SIAM), Philadelphia, PA, 2011. Reprint of the 1985
original.
 \bibitem{HA} A. Henrot, \emph{Extremum problems for eigenvalues of elliptic operators},  Springer Science \& Business Media, 2006.
\bibitem{DH}D. Henry, \emph{Perturbation of the boundary in boundary-value problems of partial differential equations}, No. 318. Cambridge University Press, 2005.

\bibitem{mcgough} J. McGough and J. Mortensen, \emph{ Pohozaev obstructions on non-starlike domains}, Calc. Var. Partial Differential Equations 18 (2003), no. 2, 189--205.



\bibitem{Reichel-book}W. Reichel, \emph{Uniqueness theorems for variational problems by the method of transformation groups}, Lecture Notes in Mathematics, 1841. Springer-Verlag, Berlin, 2004. xiv+152 pp. 


 
\bibitem{RX} X. Ros-Oton and J. Serra,  \emph{The Dirichlet problem for the fractional Laplacian: regularity up to the boundary}, J. Math. Pures Appl. (9) 101. (2014), no.3, 275--302.

\bibitem{RX-Poh} X. Ros-Oton and J. Serra, \emph{The Pohozaev identity for the fractional Laplacian}, Arch. Rat. Mech.
Anal 213 (2014), 587-628.


\bibitem{S} L. Silvestre, \emph{ Regularity of the obstacle problem for a fractional power of the Laplace operator.} Communications on Pure and Applied Mathematics: A Journal Issued by the Courant Institute of Mathematical Sciences 60, no. 1 (2007): 67--112.


\bibitem{Wagner} A.Wagner, \emph{Pohozaev's Identity from a Variational Viewpoint}, Journal of Mathematical Analysis and Applications 266, 149-159 (2002).
\end{thebibliography}
\end{document}